\documentclass[11pt]{amsart}
\usepackage[utf8]{inputenc}
\usepackage[T1]{fontenc}

\usepackage{hyperref}
\usepackage[dvipsnames]{xcolor}
\usepackage{amsmath,amsfonts,amssymb,amsthm,epsfig,epstopdf,titling,url,array,mathtools,faktor,paralist, xfrac, mathrsfs,pifont,yfonts,mathabx,leftindex,adjustbox,stmaryrd, mathdots,booktabs, caption}
\usepackage[new]{old-arrows}
\usepackage{ytableau}
\usepackage[super]{nth}
\usepackage[foot]{amsaddr}
\usepackage{graphicx}
\usepackage{amsaddr}

\numberwithin{figure}{section}
\numberwithin{table}{section}
\newtheorem{theorem}{Theorem}[section]

\newtheorem{thmdef}[theorem]{Theorem/Definition}
\newtheorem{lemma}[theorem]{Lemma}
\newtheorem{prop}[theorem]{Proposition}

\theoremstyle{definition}
\newtheorem{definition}[theorem]{Definition}
\newtheorem{example}[theorem]{Example}

\theoremstyle{remark}
\newtheorem{remark}[theorem]{Remark}

\numberwithin{equation}{section}
\newfont{\tap}{tap scaled 650}

\def \N{{\mathbb N}}

\def \FF{F} 

\def \Z{{\mathbb Z}}

\def\F{\mathcal{F}}

\def\KK{{\mathbb K}}

\def \cA{{\mathcal A}}

\definecolor{dgreen}{rgb}{0,0.5,0}        
\definecolor{dred}{rgb}{0.5,0,0}

\DeclareMathOperator{\Hom}{Hom_{\mathbb{K}Q}}
\DeclareMathOperator{\Ext}{Ext_{\mathbb{K}Q}^{1}}

\DeclareMathOperator{\rep}{rep}

\newcommand{\Gr}{\operatorname{Gr}}

\newcommand{\db}{{\mathcal D}^b}

\renewcommand\mod{{\operatorname{\,-\,mod}}}
\DeclareMathOperator{\rmod}{\operatorname{mod\,-\,\mathcal{B}}}

\newcommand\restr[2]{{
  \left.\kern-\nulldelimiterspace 
  #1 
  \littletaller 
  \right|_{#2} 
  }}

\newcommand{\littletaller}{\mathchoice{\vphantom{\big|}}{}{}{}}

\begingroup\lccode`~=`& \lowercase{\endgroup
\newcommand{\spmat}[1]{%
  \left[
  \let~=&
  \begin{smallmatrix}#1\end{smallmatrix}
  \right]
}}

\definecolor{bettergreen}{RGB}{0,125,0}
\definecolor{alizarin}{rgb}{0.85, 0.15, 0.26}
\definecolor{azure}{rgb}{0.0, 0.5, 1.0}
\definecolor{upmaroon}{rgb}{0.70, 0.07, 0.07}

\usepackage{tikz}
\usepackage{tikz-cd}
\usetikzlibrary{arrows,positioning,scopes,trees,matrix,fit,backgrounds,shapes,fit,calc}

 \usepackage{hyperref}
 \hypersetup
 {
     colorlinks=true,
     linktocpage=true,
     linkcolor=upmaroon,
     filecolor=magenta,
     citecolor=bettergreen,
     urlcolor=azure,
     pdftitle={Growth From Representation Theory}
 }

\makeatother

\title{Growth of infinite frieze patterns of affine type}

\author{Karin Baur}
\address{Ruhr-Universit\"at Bochum, Fakult\"at f\"ur Mathematik, Universit\"atsstrasse 150, D-44801 Bochum, Germany}
\email{ka.baur@me.com}

\author{Anna Felikson}
\address{Department of Mathematical Sciences, Durham University, Mathematical Sciences \& Computer Science Building, Upper Mountjoy Campus, Stockton Road, Durham, DH1 3LE, UK}
\email{anna.felikson@durham.ac.uk}

\author{Deepanshu Prasad}
\email{deepanshu.prasad@gmail.com}

\author{Pavel Tumarkin}
\email{pavel.tumarkin@durham.ac.uk}

\author{Emine Y\i ld\i r\i m}
\address{International Center for Mathematical Sciences, Institute of Mathematics and
Informatics, Bulgarian Academy of Sciences, Acad. G. Bonchev Str., Bl. 8, Sofia
1113, Bulgaria}
\email{e.yildirim@math.bas.bg}

\subjclass[2020]{Primary 13F60, 05E10, 16G20, 16G70}

\begin{document}

\begin{abstract}
We analyse the growth coefficients of infinite frieze patterns arising from cluster algebras 
using cluster modular groups and  cluster categories.  For a fixed cluster category of affine type, we prove that the collection of infinite frieze patterns given by both the homogeneous and non-homogeneous stable tubes all have the same  growth coefficients.  We also derive and verify an explicit formula for the \(k\)-th growth coefficient, expressed directly in terms of data from homogeneous tubes, or, alternatively, from appropriate elements of the corresponding cluster algebra. 
\end{abstract}

\maketitle

\setcounter{tocdepth}{1}
\tableofcontents

\section{Introduction}

The combinatorial notion of a \emph{frieze pattern} goes back to Coxeter's work in the early 1970s~\cite{C71}. A frieze pattern is an infinite strip of integers formed by finitely or infinitely many horizontal rows, bounded above (and possibly below) by rows of $0$'s and with a row of $1$'s, see Figure~\ref{fig:x_ij}, and satisfying the local determinant or ``diamond'' relation: for each four entries arranged as
\[
    \begin{tikzcd}[cramped,sep=0.001em]
	   & a \\[-0.5em]
	   b && c \\[-0.5em]
	   & d
    \end{tikzcd}
    \qquad\text{one has}\qquad bc-ad=1.
\]
If the pattern is also bounded below, it is called \textit{finite}, otherwise, it is \textit{infinite}. For finite frieze patterns, 
Coxeter proved fundamental properties such as periodicity and a glide symmetry, and Conway and Coxeter classified them in terms of polygon triangulations~\cite{CC73a,CC73b}. Because of this origin, the finite patterns are often called \emph{Conway–Coxeter frieze patterns} (or \emph{Conway–Coxeter friezes}).

Since those early results the notion of a frieze pattern has been expanded considerably and proved to be a unifying combinatorial thread across many areas of modern mathematics.  Rotated by $45^\circ$, Conway–Coxeter frieze patterns can be viewed as integer-valued functions on a grid that satisfy this determinantal relation on every $2\times2$ square, a viewpoint that led Assem, Reutenauer and Smith to the notion of $\mathrm{SL}_2$-tilings of the plane~\cite{ARS}.  Higher-rank analogues, including $\mathrm{SL}_k$-tilings and $\mathrm{SL}_k$-frieze patterns, were first introduced in~\cite{CR72} and developed in~\cite{BR,FK13,MGOST14,BFGST21,PS25}.  Furthermore, the theory has been extended to frieze patterns with coefficients~\cite{CHJ, SHL23}, over algebraic numbers~\cite{CHP}, and into a noncommutative setting~\cite{CHJ1, CHJ2}.

Frieze patterns arise from cluster algebras, as pointed out in~\cite{CC06} for cluster algebras of Dynkin type $A$. The seminal work of Fomin, Shapiro, and Thurston~\cite{FST08} established surface cluster algebras, giving a bijection between seeds and (tagged) triangulations of surfaces. Fomin and Thurston~\cite{FT18} further showed that cluster variables can be realized as Penner's lambda-lengths for arcs in the decorated Teichm\"uller space, which satisfy the Ptolemy relation. 
Since then, finite and infinite frieze patterns have been associated to various types of surfaces, see for example~\cite{AB25,BM2009} in type $D$ or~\cite{BPT16, BC21} for infinite frieze patterns from 
annuli and more generally, the characterisation of positive integral friezes (as homomorphisms) arising from bordered surfaces,~\cite{CFGT21,FT25}.

The study of frieze patterns has been profoundly advanced through their connection to the representation theory of quivers and 2-Calabi-Yau triangulated categories. A quiver is an oriented graph 
$Q=(Q_{0},Q_1)$, where $Q_0$ are the vertices, $Q_1$ the arrows (oriented edges). If the underlying oriented graph is an extended Dynkin diagram of type $\widetilde{A}, \widetilde{D}$ or $\widetilde{E}$, we say that $Q$ is an 
{\em affine quiver}. A foundational result, conjectured by Assem-Reutenauer-Smith~\cite{ARS} and proven by Keller-Scherotzke~\cite{KS11}, establishes that 
frieze patterns 
satisfy linear recursions precisely when the underlying quiver is of Dynkin or affine (Euclidean) type. This result, deeply rooted in Auslander--Reiten theory, resolved a fundamental question and spurred further investigation into frieze patterns beyond these classical types, leading to significant developments in a broader categorical setting~\cite{GS,HJ1, HJ2, KPQ}. The natural habitat for these generalizations is the framework of 2-Calabi-Yau cluster categories, see a survey article~\cite{F25}. 
For an acyclic quiver \( Q \) 
whose underlying graph is an 
extended Dynkin diagram, 
the AR-quiver of the cluster category \( \mathcal{C}(Q) \) decomposes into a transjective component and infinitely many regular components. These regular components have been leveraged to construct infinite frieze patterns via the Caldero-Chapoton map (CC map), a technique widely employed in the literature~\cite{ARS, BBGTY}.

In this paper, we investigate the so-called growth coefficients of infinite frieze patterns by using the theory of cluster modular groups (Section~\ref{sec:cl-alg-mod-gr}) and representation theory of quivers (Section~\ref{sec:rep-thy-growth}). 
In particular, we consider frieze patterns given by the tubes arising from (cluster categories) of affine quivers. 

We obtain an explicit formula for the growth coefficient by using the double arrow configuration in the mutation class of an affine Dynkin quiver, see Propositions~\ref{g_a},~\ref{g_d}, Theorem~\ref{g_e} and Proposition~\ref{prop:g_rep}.
Furthermore, we show that all these frieze patterns for any given mutation equivalence class have the same growth coefficients, Theorem~\ref{UnderstandingHigherGrowthCoeff}, and can be understood using data from the homogeneous tube in the Auslander-Reiten quiver of the cluster category.

Independent of our work, in~\cite{PS26}, Plamondon and Stella define infinite friezes for all affine types (not necessarily simply laced) using root systems. They use the machinery of theta functions to show that friezes coming from the same root system share the same growth coefficients.

\subsection*{Acknowledgements}
A.F. and P.T. are grateful to Zack Greenberg for useful discussions. K.B. was supported by the EPSRC programme grant EPSRC (EP/W007509/1). 
K.B. acknowledges the support of the Centre for Advanced Study in Oslo, Norway that funded and hosted the research project ``Geometric Methods in Higher Homological Algebra'' in 2024/26. D.P. and E.Y. are grateful to Charles Paquette for useful discussions. D.P. and E.Y. were supported by the Bulgarian National Science Fund Contract No: KP-06-N92/5, the Ministry of Education and Science of the Republic of Bulgaria, grant DO1-239/10.12.2024 and the Simons Foundation grant SFI-MPS-T-Institutes-00007697.

%
\section{Frieze patterns and friezes as homomorphisms}\label{sec:friezes}

In this section, we review essential properties of infinite frieze patterns, and recall the definition of a \emph{frieze} as a homomorphism from cluster algebra to $\Z$.

We denote the entries in a frieze pattern  
by $x_{ij}$ where $i\in \Z$ and $j\in \Z_{\ge 0}$ as in Figure~\ref{fig:x_ij}, starting with $x_{i-1,i-1}$ for the row of $0$'s. The number of the row with the entries $x_{i_k,j_k}, k\in \Z$, is defined to be $j-i-1$, so that the row of $0$'s is in row $-1$, the row of $1$'s in row $0$, etc.

\begin{figure}[!ht]
\epsfig{file=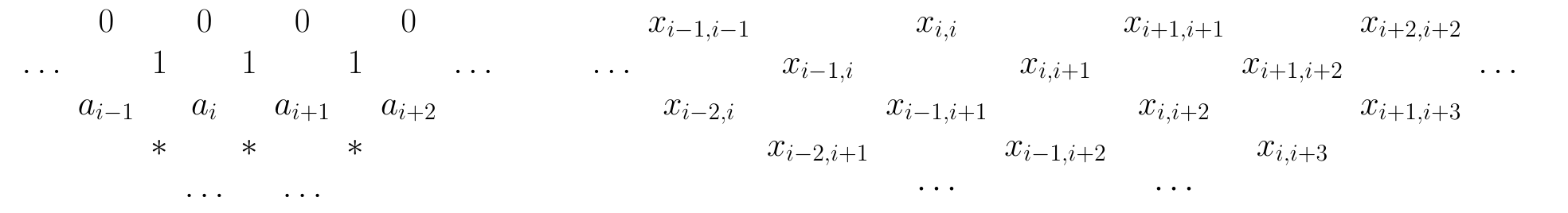,width=0.95\linewidth} 
\caption{Labeling of the entries in a frieze pattern.}  \label{fig:x_ij}
\end{figure}

By the diamond rule, any frieze pattern \( \mathcal{F} \) is determined by its first non-trivial row, namely the row with entries $a_i:=x_{i-2,i}$, i.e. by \( (a_i)_{i \in \mathbb{Z}} \). This row is called the \emph{quiddity row}. A frieze pattern is called \emph{periodic} if the quiddity row is periodic. In particular, if $\mathcal{F}$ is \( n \)-periodic, it is completely determined by an \( n \)-tuple $q = (a_i, \dotsc, a_{i+n-1})$ from its quiddity row. Every such tuple is called a \emph{quiddity sequence} of \( \mathcal{F} \).

Throughout this article, we focus on infinite periodic frieze patterns. 
All quiddity sequences are considered up to cyclic equivalence, i.e., \( (a_1, \dots, a_n) \sim (a_2, \dots, a_n, a_1) \). An example of an infinite frieze pattern is shown in Figure \ref{fig:infinite_frieze}. 

Let $\F$ be an $n$-periodic infinite frieze pattern. 
We consider the difference $x_{i,kn}-x_{i+1,kn-1}$ between entries in row $kn$ and row $(kn-1)$ which is the entry 
directly above row $kn$. The following proposition tells 
us that this difference is constant 
across the row, i.e.~independent of $i$ and that these differences are given by a recursive formula.

\begin{prop}(see ~\cite[Section 2]{BFPT19},~\cite[Proposition 4.1.1]{BCJKT})~\label{PropertiesofGrowthCoeff}
Let $\mathcal{F}$ be a $n$-periodic infinite frieze pattern. For every $k\in\mathbb{N}$, we have the following properties:
    \begin{enumerate}
        \item $x_{i,kn+i-1} - x_{i+1,kn+i-2}=s_k(\mathcal{F})$ for all $i\in\mathbb{Z}$; 
        \item $s_{k+1}(\mathcal{F})=s_1(\mathcal{F})s_{k}(\mathcal{F})-s_{k-1}(\mathcal{F})$, where $s_{0}(\mathcal{F})=2$
    \end{enumerate}
\end{prop}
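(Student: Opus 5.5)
The approach is to encode the frieze pattern through $\mathrm{SL}_2$ matrices and read off both properties from the trace of the monodromy matrix. For a frieze pattern with quiddity row $(a_i)$ one has the standard three-term recursion along each diagonal, which follows from the diamond rule by induction on the row number:
\[
x_{i,j+1} = a_{j+1}\,x_{i,j} - x_{i,j-1},
\]
with initial values $x_{i,i-1}=0$ and $x_{i,i}=1$. (Here indices are arranged so that $a_{j+1}=x_{j-1,j+1}$; we adjust to the convention of Figure~\ref{fig:x_ij}.) Setting $M_j=\spmat{a_j & -1\\ 1 & 0}$, the vector $(x_{i,j},x_{i,j-1})^T$ is obtained from $(1,0)^T$ by multiplying with $M_j M_{j-1}\cdots M_{i+1}$. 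Hence the entries of products of the $M_j$ are frieze entries:
\[
M_{j}\cdots M_{i+1} = \spmat{x_{i,j} & -x_{i+1,j}\\ x_{i,j-1} & -x_{i+1,j-1}}.
\]
This identity is checked by induction on $j$, using the recursion for both columns.

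Next we use periodicity. Put $j=i+kn-1$, shift indices suitably, and let $P_i=M_{i+n}\cdots M_{i+1}$ be the monodromy over one period starting at $i$. By $n$-periodicity, the product over $kn$ consecutive factors is $P_i^k$. Its trace equals $x_{i,kn+i-1}-x_{i+1,kn+i-2}$, possibly after an index shift to match the statement's convention; this is the diagonal sum of the matrix above. Independence of $i$ follows because $P_{i+1}=M_{i+n+1}P_iM_{i+1}^{-1}$ and $M_{i+n+1}=M_{i+1}$, so $P_{i+1}$ is conjugate to $P_i$ and $\operatorname{tr}P_{i+1}^k=\operatorname{tr}P_i^k$. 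This proves (1) with $s_k=\operatorname{tr}P^k$.

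For (2), note that $\det P=1$, since each $M_j$ has determinant $1$. By Cayley–Hamilton, $P^2=s_1P-I$. Multiplying by $P^{k-1}$ and taking traces gives
\[
s_{k+1}=s_1s_k-s_{k-1},
\]
and $s_0=\operatorname{tr}I=2$.

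The main obstacle is bookkeeping rather than substance. We must match exactly the index convention of Figure~\ref{fig:x_ij}, namely that row $kn$ has entries $x_{i,kn+i-1}$. The matrix identity has to be verified with the correct indices, so that the trace is precisely $x_{i,kn+i-1}-x_{i+1,kn+i-2}$ and not an off-by-one variant. I would fix this by checking the case $k$ small against the quiddity definition $a_i=x_{i-2,i}$ before running the induction.
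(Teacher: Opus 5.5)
Your proof is correct. The paper does not prove this proposition itself; it only cites \cite{BFPT19} and \cite{BCJKT}. Your argument is a complete, self-contained replacement using the standard $\mathrm{SL}_2$ mechanism:
\begin{itemize}
\item diagonal entries are entries of products of the matrices $M_j=\left[\begin{smallmatrix} a_j & -1\\ 1 & 0\end{smallmatrix}\right]$;
\item $s_k=\operatorname{tr}P^k$ for the one-period monodromy $P$;
\item independence of $i$ follows from $P_{i+1}=M_{i+1}P_iM_{i+1}^{-1}$;
\item the recursion follows from Cayley--Hamilton with $\det P=1$ and $s_0=\operatorname{tr} I=2$.
\end{itemize}

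Two details need fixing in a write-up. First, deriving $x_{i,j+1}=a_{j+1}x_{i,j}-x_{i,j-1}$ from the diamond rule is where a hypothesis enters. The inductive step only gives
\[
x_{i+1,j}\,\bigl(x_{i,j+1}-a_{j+1}x_{i,j}+x_{i,j-1}\bigr)=0,
\]
where $x_{i+1,j}$ is the entry directly above $x_{i,j+1}$. So you must cancel a non-zero entry. This is automatic for the positive frieze patterns of this paper; in general it is where the tameness assumption of \cite{BFPT19} is needed. You should state it.

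Second, do not try to match the printed indices literally. With the convention of Figure~\ref{fig:x_ij}, where $x_{i,j}$ lies in row $j-i-1$, the expression $x_{i,kn+i-1}-x_{i+1,kn+i-2}$ compares rows $kn-2$ and $kn-4$. The intended quantity, used in Section~\ref{sec:annuli}, is $x_{i,i+kn+1}-x_{i+1,i+kn}$: an entry of row $kn$ minus the entry two rows above it.

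In your normalisation $x_{i,i}=1$, $x_{i,i-1}=0$, the entry $x_{i,j}$ lies in row $j-i$. Also $a_j=x_{j-1,j}$, not $x_{j-2,j}$ as you wrote. The trace of the $kn$-fold product $M_{i+kn}\cdots M_{i+1}$ is then $x_{i,i+kn}-x_{i+1,i+kn-1}$, which is exactly the intended quantity. So take $j=i+kn$, not $j=i+kn-1$; the latter gives only $kn-1$ factors, which is not a power of $P$.

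As a check, the quiddity sequence $(8,2)$ gives $P=\left[\begin{smallmatrix} 15 & -2\\ 8 & -1\end{smallmatrix}\right]$. Hence $s_1=14$ and $s_2=14^2-2=194$, in agreement with Figure~\ref{fig:infinite_frieze} and the $\widetilde D_4$ example.
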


We call the difference 
$s_k(\F):=x_{1,kn}-x_{2,kn-1}$, for $k\in \N$, the \emph{$k$-th growth coefficient of $\F$} and $s_{1}(\mathcal{F})$ its \emph{principal growth coefficient}. By the proposition, this is an invariant of the frieze pattern. 

Recall that the 
\emph{normalized Chebyshev polynomials of the first kind} are defined inductively by:
\[
    T_0(x) = 2,\quad T_1(x) = x,\quad \text{and} \quad T_{k+1}(x) = x T_k(x) - T_{k-1}(x) \quad \text{for any } k \geq 1
\]
and 
\emph{normalized Chebyshev polynomials of the second kind} are defined inductively by:
\[
    S_0(x) = 1,\quad S_1(x) = x,\quad \text{and} \quad S_{k+1}(x) = x S_k(x) - S_{k-1}(x) \quad \text{for any } k \geq 1
\]
Moreover, $T_{k}(x)=S_{k}(x)-S_{k-2}(x)$, where we set $S_{-2}(x)=-1$ and 
$S_{-1}(x)=0$.

\begin{remark}\label{rem:Cheby1-2}

If we set $x=s_1(\F)$ we obtain 
$s_k(\F)=T_k(x)$ for all $k\ge 0$, so the growth coefficients 
$s_k(\F)$ satisfy the normalized Chebyshev polynomials of the first kind.
\end{remark}

\begin{figure}
\centering
\[
\begin{tikzcd}[row sep=1.5ex, column sep=1.5ex]
    & \dotsb && 0 && 0 && 0 && 0 & {} & \dotsb \\
    \dotsb && 1 && 1 && 1 && 1 && 1 && \dotsb \\
    & \dotsb && 8 && 2 && 8 && 2 && \dotsb \\
    \dotsb && 15 && 15 && 15 && 15 && 15 && \dotsb \\
    & \dotsb && 28 && 112 && 28 && 112 && \dotsb \\
    && \vdots && \vdots && \vdots && \vdots && \vdots
\end{tikzcd}
\]
\caption{A $2$-periodic infinite frieze pattern with quiddity sequence $(8,2)$.}
\label{fig:infinite_frieze}
\end{figure}

\begin{remark}\label{rem:types}
As described in~\cite{BCJKT, BBGTY}, for a given affine quiver $Q$, one can define an infinite frieze pattern from its non-homogeneous stable tubes in the associated 
cluster category $\mathcal{C}(Q)$. 
Fix a cluster tilting object 
$T$ of $\mathcal{C}(Q)$. 
This corresponds to an initial cluster of the associated cluster algebra. 
We apply the 
Caldero--Chapoton map or CC-map $X_?^T$, see 
Section~\ref{sec:tame-hered-alg} to the $\tau$-orbit of the quasi-simple objects $(R_1,R_2,\dots, R_n)$ of a 
non-homogeneous tube of rank $n$, thus 
obtaining $n$ cluster variables. We then specialise all variables of the initial cluster (corresponding to $T$) to $1$ and obtain a sequence $(x^{T}_{R_1}, \dots, x^{T}_{R_{n}})$ of integers. 

This sequence serves as the quiddity row, and the diamond rule is applied recursively to generate an infinite frieze pattern. 
For an affine quiver of type $\widetilde{A},\widetilde{D}$, or $\widetilde{E}$, the infinite frieze patterns constructed in this manner will be called \emph{infinite frieze patterns of affine type 
$A,D$ or $E$}, respectively. 
These infinite frieze patterns are $n$-periodic by 
construction. 
\end{remark}

\begin{remark}\label{rem:frieze-surface}
Another way to define infinite frieze patterns is using surfaces, similarly as for 
Conway-Coxeter friezes. A {\em (marked) surface} is a connected oriented surface with boundary components and a finite set of marked points on the boundary or in the interior. The marked points in the interior are called punctures. Non self-crossing curves with endpoints at the marked points are called arcs (curves are always considered up to homotopy fixing endpoints).  
A triangulation $T$ of the surface is a maximal collection of pairwise non-crossing arcs. Any triangulation defines a cluster or the surface cluster algebra,~\cite{FST08}. 
Then one obtains an infinite frieze pattern for every connected component of the boundary by counting matching numbers between arcs at that component and triangles of the triangulation, see Section~\ref{sec:annuli} below. 
\end{remark}

We also recall the following notion: 
\begin{definition}\label{def:frieze-homom}
Let $S$ be a marked surface and $\cA= \cA(S)$ its surface cluster algebra. 
A \textit{frieze $\FF$ on $S$} 
is a homomorphism from $A$ to $\mathbb{Z}$ such that the image of every cluster variable (i.e. of every non-self-crossing arc in the surface) is positive. 
\end{definition}

It follows from~\cite[Sections 3,4]{MSW13} that $\FF$ also assigns positive integers to all self-crossing curves in $S$ with endpoints at marked points, as well as to all non-contractible closed curves. 

\begin{remark}
    \label{hyp}
    Due to the results of~\cite{Pen,FT18}, a frieze $\FF$ on a punctured surface $S$ can be also understood as a decorated hyperbolic structure on $S$. In this interpretation, the value of (cluster variable of) an arc $\gamma$ is equal to the $\lambda$-length of $\gamma$, and the value of an element of the cluster algebra corresponding to a simple closed curve $\sigma$ is equal to $2\cosh\frac{l(\sigma)}{2}$, where $l(\sigma)$ is the hyperbolic length. See~\cite{FT25} for details.   
\end{remark}

We note that friezes $\FF$ on surfaces also give rise to (finite or infinite) frieze patterns. 
If the surface is a polygon, one obtains a Conway--Coxeter frieze pattern from $\FF$ by arranging the values under $\FF$ of the (cluster variables of the) diagonals $ij$ as  the entries $x_{ij}$ in the pattern. Friezes $\FF$ on annuli or punctured discs give infinite periodic frieze patterns by taking the values of the (cluster variables of the) arcs homotopic to a concatenation of $k\ge 2$ boundary segments of a component of the boundary to be the entries 
$x_{i,i+k}$, see Section~\ref{sec:annuli}, i.e. 
the quiddity sequence, or by 
considering the values of certain arcs between 
the two punctures (\cite[Section 3]{BBGTY}). 

In general, if the surface is not a polygon, 
since a frieze $\FF$ associates an integer to \textit{every} cluster variable, the notion of a frieze as in  Definition~\ref{def:frieze-homom} 
is different from the (collection of) infinite 
frieze patterns arising from arcs at boundary components or from arcs between punctures (similarly as above).

\begin{remark}
\label{rem:all}
It was shown in~\cite{BPT16} that every infinite  $n$-periodic integer frieze pattern $\F$ originates from a triangulation of a marked surface $S$, where $S$ is either an annulus $A_{m,n}$ (for some $m$) or a punctured disc $D_n$ with $n$ boundary marked points. Namely, let $(a_1,\dots, a_n)$ be the quiddity sequence of $\F$. Let 
$A_{m,n}$ or $D_n$ be the corresponding surface and denote the marked points on (one component of) the boundary by $P_1,\dots, P_n$, clockwise. Then there exists a triangulation $T$ of this surface such that every $a_i$ is equal 
to the number of triangles incident to  
vertex $P_i$ in a neighbourhood of $P_i$ (some of the triangles might be counted twice).

We note that in the case of a punctured disc $D_n$, the entries in any diagonal of $\F$ form $n$ arithmetic progressions (Proposition 3.11 in~\cite{Tsch15}) and the growth coefficients are 
all equal to $2$. 
All other infinite periodic frieze patterns have $s_1(\F)>2$ (Proposition 4.4~\cite{BFPT19}) and can be constructed via triangulations of annuli. We will from now on concentrate on the infinite frieze patterns whose entries grow fast, i.e. which arise from triangulations of annuli. 
\end{remark}

\section{Growth via Cluster Modular Group}
\label{sec:cl-alg-mod-gr}

In this section, we give a combinatorial interpretation of the growth coefficients of infinite frieze patterns of types $\widetilde A, \widetilde D, \widetilde E$.

\subsection{Growth and Skein Relations}
\label{sec:skein}

We start by considering types $\widetilde A$ and $\widetilde D$ by using their surface realisations. 

\subsubsection{Affine $A$: annuli}\label{sec:annuli}

Let  $\F$ be an infinite $n$-periodic frieze pattern with entries $\{x_{i,j}\}$, 
let $T$ be a 
triangulation of an annulus $A_{m,n}$ giving rise to $\F$ (by Remark~\ref{rem:all}, we may assume that $S=A_{m,n}$ is an annulus). We 
denote by $\FF$ the corresponding frieze on the annulus $A_{m,n}$.

By Proposition~\ref{PropertiesofGrowthCoeff}, the first growth coefficient (or simply {\em growth}) is equal to $x_{i,i+n+1}-x_{i+1,i+n}$, 
independently of $i$. 
The entries in $\F$ are given by matching numbers between the triangles of $T$ and the vertices on the boundary: to compute $x_{ij}$ for $j-i>1$, one takes an arc $\gamma_{ij}$ in $S$ starting at vertex $P_{i}$ and ending at $P_{j}$ 
which is homotopic to the concatenation of $j-i$ boundary segments, clockwise around the boundary: 
the entry $x_{ij}$ is equal to the number of matchings between triangles of $T$ and the vertices of $S$ covered by this arc, see Section 5 in~\cite{BPT16}. 
We call arcs arising from concatenating segments of the same boundary \textit{peripheral}.
We note that the arc $\gamma_{ij}$ is self-intersecting if and only if $j-i>n$, the first case of a self-intersecting arc being precisely the one needed to compute $x_{i,i+n+1}$.

Let $\sigma$ be the unique simple closed curve in the annulus. Denote by $\Theta=\FF(\sigma)$ the value of the element in the cluster algebra corresponding to $\sigma$ in the frieze $\FF$.

If we smooth the self-crossing of the arc $\gamma_{i,i+n+1}$ using skein relations (see~\cite[Section 2]{MSW}), we obtain the equation 
$x_{i,i+n+1}=x_{i+1,i+n}+\Theta\cdot 1$. 
Therefore, the growth 
coefficient of the frieze pattern $\F$ is equal to the value $\Theta$ (see also~\cite[Remark 6.5]{BMBFT}).  

This argument gives an alternative proof of the fact that the growth coefficients of the two infinite periodic frieze patterns given by the matching numbers at the two boundary components of the same triangulated annuls coincide,~\cite{BFPT19}.  

\begin{remark}
\label{br_a}
The same argument applied to one self-crossing of the arc $\gamma_{i,i+kn+1}$ shows that the $k$-th growth coefficient of $\F$ is equal to the value of $\FF$ on the $k$-th \emph{bracelet} (see~\cite[Definition 3.16]{MSW13}), i.e. on the closed curve going $k$ times around the annulus.    
\end{remark}

\subsubsection{Affine $D$: twice punctured discs}\label{sec:affineD}

A frieze $\FF$ arising from a triangulation of 
a twice punctured disc with $n$ boundary marked points contains three infinite periodic frieze patterns: one consists of values of all peripheral arcs (as in Section~\ref{sec:annuli}), arising from the matching numbers of the $\gamma_{ij}$. 
The other two arise from arcs at the two punctures as we recall below. 

Let $\sigma$ be the unique simple closed curve enclosing the two punctures and denote by $\Theta$ the associated integer as before. 
Similarly to the case $A_{m,n}$, by smoothing arcs $\gamma_{i,i+n+1}$, one can see that 
the growth of the infinite frieze pattern arising from the peripheral arcs is equal to $\Theta$.

\begin{remark}
\label{br-d}
Similarly to the case $\widetilde A$, the $k$-th growth coefficient is again equal to the value of $\FF$ on the $k$-th bracelet going $k$ times along the boundary of the disc.  
\end{remark}

To describe the quiddity sequences and growth of the other two infinite frieze patterns we need to 
recall the notion of \emph{tagged arcs} from~\cite{FST08}. 

If the surface $S$ has punctures, arcs are equipped with a \emph{tag} at their endpoints: at the boundary, the tag is always plain. At a puncture, a tag can be plain or notched. Two arcs whose untagged versions are different are said to be crossing at the  puncture $P$ if their tag at $P$ is different. Two arcs whose untagged versions coincide are non-crossing if and only if they have opposite tags at exactly one end.

Let $A$ and $B$ be the two punctures, let $\gamma$ be the arc $AB$ tagged plain at both ends. We denote by $\gamma_A$, $\gamma_B$ 
the arc $\gamma$ tagged notched only at $A$, $B$, respectively, and by $\gamma_{AB}$ the arc notched at both $A$ and $B$. Then the two remaining quiddity sequences arise from the values of of these four arcs in the frieze $\FF$
one infinite frieze pattern has as its quiddity sequence the values of arcs $\gamma_A$ and $\gamma_B$, while the other has as its quiddity the values of arcs $\gamma_{AB}$ and $\gamma$.

Consider the former one. The arcs $\gamma_A$ and $\gamma_B$ cross at $A$ and at $B$. Applying skein relations~\cite[Figure 23, Section 8]{MSW}, one obtains four closed 
curves from smoothening the pair $\gamma_A,\gamma_B$, and so the product $\FF(\gamma_A)\FF(\gamma_B)$ is 
equal to the sum of the corresponding four terms: $\Theta$, the values of the two loops around $A$ and $B$, and the value of the contractible loop.

The contractible loop has value $-2$ in the cluster algebra, while loops around single punctures have value $2$.  Therefore, $\FF(\gamma_A)\cdot \FF(\gamma_B)=\Theta+2$. The diamond rule implies that every entry of the second non-trivial row is equal to  $\FF(\gamma_A)\cdot \FF(\gamma_B)-1$, and thus it is equal to $\Theta+1$. Since the infinite frieze pattern is $2$-periodic, the growth is equal to the difference $(\Theta+1)-1=\Theta$ and so it coincides with the growth of the infinite frieze pattern consisting of the values of peripheral arcs.

The consideration of the latter case (with $\gamma$ and $\gamma_{AB}$) is very similar. Application of the skein relations gives precisely the same expression, which implies that the growth is also equal to $\Theta$.

In particular, this argument gives another proof of the result of~\cite{BBGTY} stating that the growths of the three infinite periodic frieze patterns obtained from $\widetilde D_n$ coincide.

\begin{remark}
The infinite frieze pattern 
arising from $\gamma_A$ and $\gamma_B$ 
can be understood as follows. 
Denote by $l_A$ the loop around $A$ based at $B$, and by $l_B$ a loop around $B$ based at $A$. Let $g:=\FF(\gamma)$ be the value of the untagged arc $\gamma$. Then $\FF(\gamma_A)=\FF(l_A)/g$, and $\FF(\gamma_B)=\FF(l_B)/g$ (see~\cite[Lemma 7.10]{FT18}). For $k>0$, denote by $Br_k$ the curve with endpoints $A$, $B$ going around $A$ and $B$ precisely $k-1$ times (e.g., $\gamma=Br_1$). Denote by $L_{k,A}$ and $L_{k,B}$ the loops based at $B$ and $A$ respectively going around the other puncture and then around both $A$ and $B$ precisely $k-1$ times (e.g., $L_{1,A}=l_A$).

Note that $L_{0,A}$ is a contractible loop at $B$ and thus $\FF(L_{0,A})=0$, and similarly  $\FF(L_{0,B})=0$. Note also that $B_1=\gamma$, so $Br_1/g=1$. We can describe the infinite periodic frieze pattern (arising from $\gamma_A$ and $\gamma_B$) 
with this notation: 
every $2k$-th row consist of alternating values $\FF(L_{k,A})/g$ and $\FF(L_{k,B})/g$, while every odd $(2k+1)$-st row consists of the value $\FF(Br_k)/g$.

The construction above can be also modeled as follows. We cut the surface along $\gamma$ to get an annulus with a boundary component containing two marked points $A$ and $B$, and the two boundary arcs are labeled by $\FF(\gamma)=g$. Restricting $\FF$ to the new surface we get a {\em frieze with coefficients} (we keep the same notation $\FF$). Then the infinite frieze pattern  consists of values of all the peripheral arcs of this new boundary component (tagged plain) divided by $g$.   

We can alter the construction by considering all peripheral arcs tagged notched on both ends, and substituting $g=\FF(\gamma)$ with $\FF(\gamma_{AB})$ in the denominator. The resulting infinite frieze pattern differs by a horizontal shift by one step.

\medskip

The construction of the frieze pattern arising from $\gamma$ and $\gamma_{AB}$ is very similar to the former one: 
its $2k$-th rows consist of alternating values $\FF(L_{k,A})/\FF(\gamma_A)$ and $\FF(L_{k,B})\cdot \FF(\gamma_A)$, while the $(2k+1)$-st rows still consist of the values $\FF(Br_k)/g$. Equivalently, we may cut the surface as above, but with the boundary arcs labeled by $\FF(\gamma_A)$. Then the infinite frieze pattern consists of the values of 
the peripheral arcs (at the new boundary) tagged notched at $A$ and plain at $B$ divided by $\FF(\gamma_A)$.

Note that in the latter case we could swap punctures $A$ and $B$ in the construction. The resulting infinite frieze pattern would differ by a horizontal shift by one step. 
\end{remark}

\subsection{Growth and Cluster Modular group}

The growth of infinite frieze patterns from cluster algebras in the affine types $\widetilde{A}$, $\widetilde{D}$  and $\widetilde{E}$ can be understood in terms of the \emph{cluster modular group}, which is the group of the mutation sequences in a cluster algebra preserving the given quiver. Cluster modular groups of affine cluster algebras were computed in~\cite{ASS}, we will use a different presentation by Greenberg and Kaufman~\cite{KG}. 

We will first consider the types $\widetilde{A}$ and $\widetilde{D}$, where the cluster modular group can be expressed via the mapping class group of the corresponding surface, and then extend this approach to the type $\widetilde{E}$.

%
\subsubsection{
Cluster algebras of type $\widetilde A_{m,n}$ and $\widetilde D_{n+2}$}
\label{ad}

We now consider the mapping class groups of the underlying surfaces of cluster algebras of type $\widetilde{A}_{m,n}$ and $\widetilde{D}_{n+2}$, namely, an annulus with $m$ points on one boundary component and $n$ points on the other one, and a twice punctured disc with $n$ points on the boundary, respectively. 

Both mapping class groups are virtually infinite cyclic, with the generator $\tau$ of the cyclic subgroup being the Dehn twist along the unique simple closed curve. By~\cite{FST08}, the cluster modular group is an extension of the mapping class group by $\Z_2$ at every puncture, so it is still virtually cyclic with the generator of its cyclic subgroup being the same Dehn twist $\tau$. According to Section~\ref{sec:skein}, the growth 
of any corresponding infinite frieze pattern $\F$ is equal to the value of $\FF$ on the element of the cluster algebra corresponding to the simple closed curve. In other words, the growth
can be interpreted as the value of $\FF$ on the element in the cluster algebra corresponding to the generator of an infinite cyclic subgroup of the cluster modular group.

This value can be easily read off from any seed of the cluster algebra with a quiver containing a double arrow. In this seed, the generator $\tau$ corresponds to a mutation in an endpoint of the double arrow (see~\cite[Lemma B.3]{KG}). In  case of $\widetilde A_{m,n}$, every such quiver looks like the one shown in Fig~\ref{fig-amn}(a), and it contains a subquiver shown in Fig.~\ref{fig-amn}(b) (we denote by $x_v$ the cluster variable corresponding to vertex $v$; the variables $x_a,x_b$ may be frozen, they are equal to $1$ in that case). Calculating the element of the cluster algebra corresponding to the unique simple closed curve, we obtain the explicit expression for the growth.
\begin{prop}
\label{g_a}
In the notation of Fig.~\ref{fig-amn}, the growth $\Theta$ of any infinite frieze pattern of type $\widetilde A_{m,n}$ is equal to 
    $$\Theta=\frac{x_1}{x_0}+\frac{x_0}{x_1}+\frac{x_ax_b}{x_0x_1}.$$
    \end{prop}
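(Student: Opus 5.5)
The starting point is Section~\ref{sec:annuli}: the growth of any infinite frieze pattern $\F$ coming from a triangulated annulus $A_{m,n}$ equals $\Theta=\FF(\sigma)$, where $\sigma$ is the unique simple closed curve. So the statement reduces to a cluster-algebra identity. I would show that the element $X_\sigma$ of the cluster algebra attached to $\sigma$, written as a Laurent polynomial in the cluster of a seed whose quiver contains the double arrow of Fig.~\ref{fig-amn}, equals
$$\frac{x_1}{x_0}+\frac{x_0}{x_1}+\frac{x_ax_b}{x_0x_1}.$$
Evaluating under $\FF$ then gives the claim, since $\FF$ is a ring homomorphism and all cluster variables are sent to positive integers, so nothing is divided by zero. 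The identity does not depend on $\FF$, so it holds for every frieze simultaneously.

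Next I would fix the surface picture of this seed. In the triangulation $T$ corresponding to a quiver with a double arrow $0\Rightarrow 1$, the arcs $\gamma_0,\gamma_1$ labelled $0$ and $1$ are two bridging arcs. They go between the two boundary components, and together they cut out a quadrilateral-like region containing the core of the annulus. Its remaining sides are the arcs (or boundary segments) labelled $a$ and $b$, one on each boundary component. This matches the subquiver in Fig.~\ref{fig-amn}(b), with $a$ and $b$ attached to the double arrow as drawn. Only the part of $T$ crossed by $\sigma$ matters.

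To compute $X_\sigma$, I would pick a representative of $\sigma$ that crosses $T$ minimally. It crosses only $\gamma_0$ and $\gamma_1$, each exactly once. I would then use either of two methods:
\begin{itemize}
\item the band-graph expansion of \cite{MSW13}, where the band graph consists of two tiles glued cyclically, giving three perfect-matching terms; or
\item the skein relation of \cite{MSW} applied to the product $x_0\cdot X'$ for a suitable arc.
\end{itemize}
The skein route is more concrete. The curve $\sigma$ crosses $\gamma_0$ once, so the skein relation gives $x_0X_\sigma=\FF(\delta_1)+\FF(\delta_2)$, where $\delta_1,\delta_2$ are the two arcs obtained by resolving the crossing. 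Both are arcs with both endpoints at the endpoints of $\gamma_0$, winding once around the core. One of them is the flip of $\gamma_0$ in the quadrilateral, and the exchange relation gives its value as $(x_1^2+x_ax_b)/x_0$. The other, after checking homotopy, is $\gamma_0$ shifted by the Dehn twist direction and has value $x_1$. Indeed, mutation at an endpoint of the double arrow realises $\tau$ by \cite[Lemma B.3]{KG}, and the two exchange monomials are $x_1^2$ and $x_ax_b$. Dividing by $x_0$ then gives
$$\frac{x_1^2+x_ax_b}{x_0x_1}+\frac{x_1}{x_0}\cdot\frac{x_0}{x_1},$$
which I would rearrange to the stated form.

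The main obstacle is getting the bookkeeping right: which resolved curve is which arc, and where $x_a,x_b$ enter, given that they sit on different boundary components and may be boundary segments with value $1$. If the skein computation becomes confusing, I would fall back on the band-graph formula. With two tiles whose diagonals are labelled $0$ and $1$ and whose sides are $\{1,a,1,b\}$ and $\{0,\dots\}$, one lists the three good matchings directly. Their weights divided by $x_0x_1$ give $x_1^2$, $x_0^2$, and $x_ax_b$, yielding the formula immediately.
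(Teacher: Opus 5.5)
Your reduction is the paper's own. The smoothing argument of Section~\ref{sec:annuli} gives growth $=\Theta=\FF(\sigma)$, and the paper then computes the cluster algebra element of $\sigma$ in a seed with a double arrow. The problem is your skein computation, which as written does not produce the formula.

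With a double arrow $0\Rightarrow 1$, the arcs $\gamma_0,\gamma_1$ join the same two marked points and differ by one Dehn twist, say $\gamma_1=\tau(\gamma_0)$. The two triangles are $(\gamma_0,\gamma_1,a)$ and $(\gamma_0,\gamma_1,b)$. Smoothing the crossing of $\sigma$ with $\gamma_0$ gives two arcs:
\begin{itemize}
\item $\tau(\gamma_0)=\gamma_1$, which you identified correctly;
\item $\tau^{-1}(\gamma_0)$, which is the flip of $\gamma_1$, \emph{not} of $\gamma_0$.
\end{itemize}
Flipping $\gamma_0$ gives $\tau^{2}(\gamma_0)$, not $\tau^{-1}(\gamma_0)$. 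This is consistent with $\mu_0$ realising the modular element $\tau$ on the seed, after swapping the labels $0,1$. Hence the correct relation is
\[
x_0X_\sigma=x_1+\frac{x_0^2+x_ax_b}{x_1},
\]
not $x_0X_\sigma=x_1+\frac{x_1^2+x_ax_b}{x_0}$.

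Your final display is also not a rearrangement of the target:
\[
\frac{x_1^2+x_ax_b}{x_0x_1}+\frac{x_1}{x_0}\cdot\frac{x_0}{x_1}=\frac{x_1}{x_0}+\frac{x_ax_b}{x_0x_1}+1 .
\]
Your stated relation would give yet another value, $\frac{x_1^2+x_ax_b}{x_0^2}+\frac{x_1}{x_0}$. All three expressions equal $3$ at $x_i=1$, so specialising would not expose the error. Comparing with the symmetry of the target under $x_0\leftrightarrow x_1$ would expose it.

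To fix the skein route, either use the corrected relation above, or smooth the crossing of $\sigma$ with $\gamma_1$ instead. The latter yields $\gamma_0$ and $\tau^2(\gamma_0)=\mu_0(\gamma_0)$, so $x_1X_\sigma=x_0+\frac{x_1^2+x_ax_b}{x_0}$, and dividing by $x_1$ gives the claim.

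Your band-graph fallback is correct as stated and suffices on its own: two tiles, three good matchings of weights $x_1^2$, $x_0^2$, $x_ax_b$, divided by the crossing monomial $x_0x_1$. It is the surface counterpart of the paper's representation-theoretic check in Proposition~\ref{prop:g_rep}. There the same three terms come from the three submodules $0$, $S_1$, $N_\nu$ of the Kronecker module.
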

 Note that the value $\Theta$ does not depend on the seed as the element is defined in invariant way.

\begin{figure}[!ht]
\epsfig{file=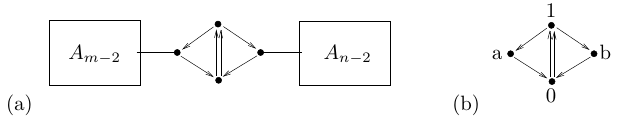,width=0.75\linewidth}\caption{Quiver of type $\widetilde A_{m,n}$ with a double arrow, and its subquiver.}\label{fig-amn}
\end{figure}

Similarly, in case of $\widetilde D_{n+2}$, every such quiver looks like the one shown in Fig~\ref{fig-dn}(a), and it contains a subquiver shown in Fig.~\ref{fig-dn}(b). Then one can calculate that the following holds. 
\begin{prop}
In the notation of Fig.~\ref{fig-dn}, the growth $\Theta$ of any infinite frieze pattern of type $\widetilde D_{n+2}$ is equal to
\label{g_d}
$$\Theta=\frac{x_1}{x_0}+\frac{x_0}{x_1}+\frac{x_ax_bx_c}{x_0x_1}.$$
\end{prop}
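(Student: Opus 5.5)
By Section~\ref{sec:affineD}, the growth of each of the three infinite frieze patterns coming from the twice punctured disc equals $\Theta=\FF(\sigma)$. Here $\sigma$ is the simple closed curve enclosing both punctures. So it suffices to compute the element of the cluster algebra corresponding to $\sigma$, expanded in the cluster of a seed whose quiver contains a double arrow, as in Fig.~\ref{fig-dn}. This element is defined invariantly, so the answer will not depend on the chosen seed. We can therefore work with the local configuration of Fig.~\ref{fig-dn}(b).

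\textbf{Step 1: identify the triangulation.} First we find the tagged triangulation corresponding to the double-arrow quiver. Vertices $0$ and $1$ joined by a double arrow correspond to two arcs $\gamma_0,\gamma_1$ forming an annular region around the pair of punctures, separating them from the boundary. The pattern is the same as in type $\widetilde A$, where the double arrow comes from two arcs bounding a strip around the core. The remaining vertices $a,b,c$ adjacent to $0,1$ are the arcs on the two sides of this strip. Concretely, one side contains two arcs at the punctures (for example $\gamma$ and $\gamma_A$, or two pending arcs into $A$ and $B$), giving two of $a,b,c$. The other side contains one arc (or boundary segment) toward the boundary, giving the third. When one of these is a boundary segment it is frozen with value $1$, as in Proposition~\ref{g_a}.

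\textbf{Step 2: compute $\Theta$.} Using the Musiker--Schiffler--Williams expansion, or equivalently the band graph / snake graph of $\sigma$, we see that $\sigma$ crosses only $\gamma_0$ and $\gamma_1$, each once. The band graph then has two tiles, and its perfect matchings produce three terms:
\begin{itemize}
\item[(i)] $x_1/x_0$;
\item[(ii)] $x_0/x_1$;
\item[(iii)] one further term, whose numerator is the product of the boundary labels on the sides of the quadrilaterals, namely $x_ax_bx_c$ over $x_0x_1$.
\end{itemize}

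An alternative check avoids snake graphs. Following~\cite[Lemma B.3]{KG}, the Dehn twist $\tau$ is realized by mutation at an endpoint of the double arrow, and the skein relation gives $x_0\,\mu_0(x_0)\cdot\ldots$. Concretely, we write $\sigma$ via the exchange relation $x_0x_0'=x_1^2+x_ax_bx_c$, where $x_0'$ is the mutated variable. We then use the skein identity $\Theta\, x_1 = x_0 + x_0'$, which smooths the crossing of $\sigma$ with the arc $\gamma_1$ (in the double-arrow annulus, $x_0,x_1,x_0'$ are consecutive arcs differing by the twist). Substituting $x_0'$ gives exactly $x_1/x_0+x_0/x_1+x_ax_bx_c/(x_0x_1)$.

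\textbf{Main obstacle.} The hard part is the verification of the exchange relation's monomial $x_ax_bx_c$. This means checking, in each possible tagged local configuration (self-folded triangles, notched tags, arcs $\gamma_A,\gamma$ versus pending arcs), that the three neighbours of vertex $0$ other than $1$ all point the same direction relative to $0$ and contribute one factor each. We would handle this by listing the finitely many local pictures around the two punctures allowed by Fig.~\ref{fig-dn}(a). In each case we check the quiver arrows via the Fomin--Shapiro--Thurston rule, using the tagged-arc conventions recalled in Section~\ref{sec:affineD}.
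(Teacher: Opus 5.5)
Your proposal is correct and follows the paper's route. The growth is the value of the element attached to the simple closed curve $\sigma$ (Section~\ref{sec:affineD}), expanded in a double-arrow seed; the paper only asserts that ``one can calculate'' this, and your identity $\Theta x_1=x_0+x_0'$ combined with $x_0x_0'=x_1^2+x_ax_bx_c$ is a clean way to carry it out.

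Two small corrections. First, the ``two pending arcs into $A$ and $B$'' configuration cannot occur: the inner side of the strip is always a self-folded triangle, whose loop has value $x_bx_c$, and this is also the weight needed on that edge of the band graph. Second, the monomial $x_ax_bx_c$ is read directly off the arrows of Fig.~\ref{fig-dn}(b), so your ``main obstacle'' needs no case check.
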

 As in the case $\widetilde A_{m,n}$, $\Theta$ does not depend on the seed as the element is defined invariantly.
\begin{figure}[!ht]
\epsfig{file=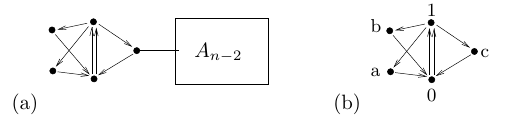,width=0.7\linewidth}  
\caption{Quiver of type $\widetilde D_{n+2}$ with a double arrow, and its subquiver.}
\label{fig-dn}
\end{figure}
\medskip

\subsubsection{Affine $E$-types}

The cluster modular groups of cluster algebras of types 
$\widetilde{E}_n$, for $n=6,7,8$,  are virtually infinite cyclic~\cite{FSTT}, we will use the explicit presentations 
from~\cite{KG}. 

In the notation of Fig.~\ref{fig-e}, the group is generated by the elements $\tau_a$, $\tau_b$, $\tau_c$ and (in the case $\widetilde{E}_6$) $\gamma$, where
\begin{itemize}
\item $\tau_a=\mu_a\mu_0\mu_1$;
\item $\tau_b=\mu_{b_1}\mu_b\mu_0\mu_1$;
\item $\tau_c=\mu_{c_k}\dots\mu_{c_1}\mu_c\mu_0\mu_1$, where $k=1,2,3$ for 
$\widetilde{E}_6$, $\widetilde{E}_7$ and 
$\widetilde{E}_8$, respectively (i.e., $k=n-5$); 
\item $\gamma$ is the unique symmetry of the quiver.
\end{itemize}

The mutation sequences should be followed with appropriate permutations of vertices of the quiver to preserve the quiver.

The relations between the generators are given by 
$$\tau_a^2=\tau_b^3=\tau_c^{k+2}$$
(where all the expressions above are equal to $\tau$), and also for $\widetilde{E}_6$ we have $\gamma\tau_a=\tau_a\gamma$, $\gamma\tau_b=\tau_c\gamma$.

\begin{figure}[!ht]
\epsfig{file=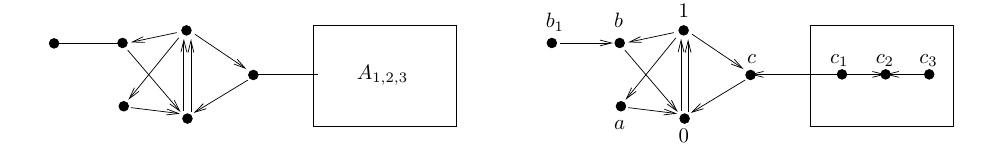,width=0.99\linewidth}  
\caption{Quiver $Q$ of type 
$\widetilde{E}_n$; the number of vertices $c_i$ is $k=n-5$.} 
\label{fig-e}
\end{figure}

Every quiver $Q$ with a double arrow in the mutation class 
of $\widetilde{E}_n$, $n=6,7,8$, looks like one shown in Fig~\ref{fig-e} (left), and it contains a subquiver shown in Fig.~\ref{fig-dn}(b). To such a $Q$, we define three subquivers $Q_a,Q_b,Q_c$ as follows.

$Q\setminus \{b,c\}$ consists of three connected components. Denote by $Q_a$ the subquiver of $Q$ obtained from $Q$ by removing all connected components of $Q\setminus \{b,c\}$ not containing $a$. Similarly, we can define  $Q_b$ and $Q_c$. More precisely,
$$Q_a=Q\setminus\{b_1,c_1,\dots,c_k\},\ Q_b=Q\setminus\{c_1.\dots,c_k\},\ Q_c=Q\setminus\{b_1\}.$$
Observe that $Q_a,Q_b,Q_c$ are of the type $\widetilde D_4$, $\widetilde D_5$, $\widetilde D_{k+4}$ respectively. 

We now claim the following.

\begin{lemma}
    The value
$$\Theta=\frac{x_1}{x_0}+\frac{x_0}{x_1}+\frac{x_ax_bx_c}{x_0x_1}$$
does not depend on the seed.
\end{lemma}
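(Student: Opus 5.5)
The plan is to reduce the statement to the type $\widetilde D$ case (Proposition~\ref{g_d}) by way of the cluster modular group. First I would make precise what "does not depend on the seed" means. $\Theta$ is defined at every seed whose quiver $Q$ has a double arrow. By the discussion preceding the lemma, every such quiver is isomorphic to the quiver of Fig.~\ref{fig-e}. Any isomorphism between two such quivers sends the double arrow to the double arrow, so it preserves the pair $\{0,1\}$, possibly swapping the two vertices, and it permutes $\{a,b,c\}$ (nontrivially only via the symmetry $\gamma$ in type $\widetilde E_6$).

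The expression $\frac{x_1}{x_0}+\frac{x_0}{x_1}+\frac{x_ax_bx_c}{x_0x_1}$ is symmetric under $x_0\leftrightarrow x_1$ and under any permutation of $x_a,x_b,x_c$. Hence it depends only on the seed up to relabeling. Two seeds with isomorphic quivers in the same mutation class differ, by definition, by an element of the cluster modular group. So it suffices to show that $\Theta$ is preserved by each generator $\tau_a,\tau_b,\tau_c$ and $\gamma$ of the presentation from~\cite{KG}. Invariance under $\gamma$ is immediate from the symmetry just noted.

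For $\tau_x$ with $x\in\{a,b,c\}$, note that the mutation sequence defining $\tau_x$ only mutates vertices of the subquiver $Q_x$, which is of type $\widetilde D_4$, $\widetilde D_5$ or $\widetilde D_{k+4}$. Moreover, $\tau_x$ never mutates the vertices of $Q\setminus Q_x$: these are $b_1$ and/or the $c_i$, attached only at $b$ or $c$. I would therefore view $Q_x$, together with the vertices outside it declared frozen, as a seed of a type $\widetilde D$ cluster algebra with coefficients. In that algebra, $\tau_x$ is the Dehn twist generator of the cluster modular group, by~\cite[Lemma B.3]{KG}. The element of Proposition~\ref{g_d} corresponds to the simple closed curve around the two punctures, and the Dehn twist fixes this curve. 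Hence its Laurent expression in the seed after applying $\tau_x$ coincides with the one before. The variables $x_0,x_1,x_a,x_b,x_c$ are exactly the ones entering that formula, with the outside vertices only contributing as frozen coefficients that never appear in $\Theta$. So $\Theta$ is invariant under $\tau_x$.

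The main obstacle is justifying that Proposition~\ref{g_d}, and the skein-relation interpretation behind it, still holds once the extra vertices $b_1,c_i$ are treated as frozen variables. This means I need the $\widetilde D$ statement for surface cluster algebras with arbitrary (geometric) coefficients, where the bracelet/closed-curve element is still well defined and Dehn-twist invariant. If this is not available in the needed generality, I would fall back on a direct verification: compute $\mu_a\mu_0\mu_1$, $\mu_{b_1}\mu_b\mu_0\mu_1$ and $\mu_{c_k}\cdots\mu_c\mu_0\mu_1$ explicitly on the variables $x_0,x_1,x_a,x_b,x_c$, and check by a routine Laurent-polynomial computation that $\Theta$ is unchanged. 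Only the first few of these mutations affect the relevant variables, so the computation stays short.
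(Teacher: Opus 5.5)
Your proposal is correct and is essentially the paper's proof. Like the paper, you reduce to the generators $\tau_a,\tau_b,\tau_c,\gamma$ of the cluster modular group from~\cite{KG}, use that each $\tau_x$ lies in the cluster modular group of the type $\widetilde D$ subquiver $Q_x$ so that Proposition~\ref{g_d} gives invariance, and note that $\gamma$ only swaps $b$ and $c$.

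Two minor corrections. First, $\tau_x$ is not itself the Dehn twist (indeed $\tau_a^2=\tau$). You do not need it to be: any element of the $\widetilde D$ cluster modular group preserves the closed-curve element. Second, your worry about frozen coefficients is moot. The vertices outside $Q_x$ are never adjacent to a vertex mutated in $\tau_x$, so the subseed on $Q_x$ mutates exactly as in the coefficient-free $\widetilde D$ cluster algebra.
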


In other words, for any seed with quiver $Q$ the value of $\Theta$ is the same. 

\begin{proof}
Any two seeds with a given quiver are connected by an element of a cluster modular group. Thus, it is sufficient to show that any generator preserves $\Theta$. Observe that $\tau_a$ is an element of the cluster modular group of a cluster algebra of type $\widetilde D_4$ with quiver $Q_a$. By Proposition~\ref{g_d}, the value $\Theta$ is independent of the seed, and thus is invariant under the action of $\tau_a$. Similarly, $\tau_b$ and $\tau_c$ act on the subquivers $Q_b$ and $Q_c$ and thus do not change $\Theta$. In case of $\widetilde E_6$, the symmetry $\gamma$ exchanges vertices $b$ and $c$ and hence does not change $\Theta$ either.
    
\end{proof}

Now observe that for each of the three subquivers $Q_a,Q_b,Q_c$ we can consider the infinite periodic frieze pattern arising from arcs at the boundary 
of the twice punctured disc for $\widetilde D_4$,  $\widetilde D_5$, $\widetilde D_{k+4}$, respectively. 

These infinite frieze patterns are precisely the ones that can be constructed from $Q$ as in Remark~\ref{rem:types}, using the CC-map $X_?^T$. Note that categorification for the cluster algebra of $Q$ is compatible with the seed and mutation structure: seeds are cluster-tilting objects and mutation is respected by the CC-map $X_?^T$. By Proposition~\ref{g_d}, the growth of each of them is equal to $\Theta$.  We summarize the observation above as the following theorem.

\begin{theorem}
\label{g_e}
 Let $Q$ be any quiver of affine type $E$ with a double arrow. In the notation of Fig.~\ref{fig-e}, the growth $\Theta$ of any infinite frieze pattern 
 of type $\widetilde E_{678}$ is equal to
$$\Theta=\frac{x_1}{x_0}+\frac{x_0}{x_1}+\frac{x_ax_bx_c}{x_0x_1}.$$   
\end{theorem}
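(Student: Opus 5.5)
The plan follows the observation made just before the statement. It reduces the $\widetilde E_n$ case to the $\widetilde D$ case, using the three subquivers $Q_a,Q_b,Q_c$ together with the seed-independence lemma.

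First, I identify the infinite frieze patterns of type $\widetilde E_n$. By Remark~\ref{rem:types} they come from the three non-homogeneous tubes of $\mathcal C(Q)$, which have ranks $2,3,k+2$. This matches the orders in the relations $\tau_a^2=\tau_b^3=\tau_c^{k+2}$. The quiddity rows are the CC-values $X^T_{R_i}$ of the quasi-simples, specialised at the initial cluster.

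Second, I match each tube with a twice punctured disc. Consider the seed with quiver $Q$ containing a double arrow, and freeze the vertices outside $Q_a$ (respectively $Q_b$, $Q_c$). The cluster subalgebra so obtained is of type $\widetilde D_4$ (respectively $\widetilde D_5$, $\widetilde D_{k+4}$), with coefficients. Its rank-$n$ tube of peripheral arcs on the twice punctured disc with $n=2,3,k+2$ boundary points yields an infinite frieze pattern. I need to check that the quasi-simples of the corresponding $\widetilde E$ tube have the same cluster variables as these peripheral arcs, i.e.\ that they are obtained from the seed by mutating only inside $Q_a$ (resp.\ $Q_b$, $Q_c$). The mutation sequences $\tau_a,\tau_b,\tau_c$ move within exactly these subquivers and generate the $\tau$-action on the respective tubes. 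Compatibility of the CC-map with mutation (cluster-tilting objects correspond to seeds) then identifies the two patterns entrywise.

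Third, I compute the growth. Proposition~\ref{g_d} applies to the frieze on the twice punctured disc for $Q_c$, say, read with coefficients: the vertices of $Q\setminus Q_c$ are frozen variables, specialised to $1$ like the cluster. In the subquiver of Fig.~\ref{fig-dn}(b), the vertices adjacent to the double arrow are $a,b,c$, so the growth equals $\frac{x_1}{x_0}+\frac{x_0}{x_1}+\frac{x_ax_bx_c}{x_0x_1}$. By the lemma above, this value is independent of the chosen seed with quiver $Q$, so the formula holds in every such seed.

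The main obstacle is the second step: identifying each non-homogeneous $\widetilde E$ tube precisely with the peripheral-arc frieze of the corresponding $\widetilde D$ subquiver, rather than with one of the two puncture friezes. I would handle this by matching ranks, since the boundary has $n$ marked points and the tube has rank $n$. Where the ranks could coincide (for $Q_a$ of type $\widetilde D_4$, all three tubes have rank $2$), the identification does not matter, because Section~\ref{sec:affineD} shows that all three $\widetilde D$ patterns have the same growth $\Theta$.
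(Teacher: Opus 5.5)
Your proposal is correct and is essentially the paper's argument. Like the paper, you identify the three non-homogeneous tube friezes with the peripheral-arc friezes of the $\widetilde D$-subquivers $Q_a,Q_b,Q_c$ via compatibility of the CC-map with mutation, read off the growth from Proposition~\ref{g_d}, and use the seed-independence lemma.

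One small correction: under a frieze coming from an arbitrary cluster-tilting object $T$, the frozen variables $x_{b_1}$ or $x_{c_i}$ need not be specialised to $1$. This is harmless, because those vertices are not adjacent to the double arrow and so do not occur in $\Theta$.
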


In the next section, we approach  Propositions~\ref{g_a} and \ref{g_d} and Theorem~\ref{g_e} using representation-theoretic methods 
and describe the growth coefficients using 
the data from the homogeneous tubes, 
see Proposition~\ref{prop:g_rep}.

%
\section{Growth via representation theory}\label{sec:rep-thy-growth}

Fix an affine quiver \(Q\) of type 
$A,D$ or $E$ with \(m\) vertices. Let $\mathbb{K}Q$ be the path algebra of $Q$. Denote by $\mathcal{C}(Q)$ the cluster category associated to $Q$, and by $T$ a basic cluster-tilting object \(T\in\mathrm{obj}(\mathcal{C}(Q))\). The $CC$-map with respect to $T$ denoted by $X^{T}_{?}$ (cf.~Section~\ref{sec:tame-hered-alg}). 
We will simply write $X_{?}$ when $T\cong\mathbb{K}Q$.

Let \(e\) be an extending vertex of \(Q\), and assume that \(e\) is a sink. Denote by \(P_e\) the indecomposable projective module at \(e\), and set $p := \underline{\dim}\,P_e$. We use $\delta$ to denote the dimension vector of the regular quasi-simple module in the homogeneous stable tube of the Auslander-Reiten quiver (AR quiver) $\Gamma(\mathbb{K}Q\mod)$. By Theorem~\ref{ParameterizationOfTubes}, every stable tube \((\mathcal{T},\tau)\) in the AR quiver $\Gamma(\mathbb{K}Q\mod)$ contains an indecomposable module \(M\in\Omega\) fitting into a short exact sequence
\[
0 \longrightarrow P_e \longrightarrow E \longrightarrow M \longrightarrow 0,
\]
where \(E\) is the unique (up to isomorphism) indecomposable preprojective module of dimension vector \(\delta + p\). Moreover, we have \(\langle p,\delta\rangle = 1\).

Now let \((\mathcal{T},\tau)\) be a non-homogeneous stable tube of rank \(n>1\). The mouth of \(\mathcal{T}\) contains a \(\tau\)-cycle of quasi-simple modules \((R_1,R_2,\dots,R_n)\). For each \(i\), define $\alpha_i := \underline{\dim}\,R_i$ 
and for $j\ge 0$, we write \(R_i[j]\) to denote the indecomposable regular module in \(\mathcal{T}\) with quasi-socle \(R_i\) and quasi-length \(j\). After relabeling if necessary, we may assume \(\alpha_n(e) \neq 0\). Denote by \(\mathscr{I}_1\) and \(\mathscr{I}_2\) the indecomposable preinjective module with
$\underline{\dim}\,\mathscr{I}_1 = \delta - p$, and $\underline{\dim}\,\mathscr{I}_2 = \alpha_n - p$.

Using \cite[Section 8]{Ding2013-xa} we establish the following identities among the elements of the cluster algebra, with respect to any cluster tilting object $T$ in $\mathcal{C}(Q)$.

\begin{prop}
\label{InterpretationOfGrowth}
Let $(\mathcal{T},\tau)$ be a non-homogeneous 
tube of rank $n$ of $\mathcal{C}(Q)$. 
With the above notation, the 
following hold: 
    \begin{enumerate}
        \item If $e$ is a sink, then $X^{T}_{R_{1}[n]}-X^{T}_{R_{2}[n-2]}=X^{T}_{M_{\lambda}}$
        \item If $e$ is a source, then $X^{T}_{R_{n}[n]}-X^{T}_{R_{1}[n-2]}=X^{T}_{M_{\lambda}}$
    \end{enumerate}
    where $M_{\lambda}$ is any quasi-simple in a stable homogeneous tube $(\mathcal{H}_{\lambda},\tau)$ for $\lambda\in\mathbb{P}^{1}$.
\end{prop}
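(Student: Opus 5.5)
We plan to derive both identities from multiplication formulas for the Caldero--Chapoton map, working first with $T\cong \mathbb{K}Q$ and then transporting the result to an arbitrary cluster-tilting object $T$. The transport is justified because an identity among the characters $X_?$ of objects of $\mathcal{C}(Q)$ holds in the cluster algebra, and the cluster character with respect to another $T$ is obtained by re-expansion in a different seed. Equivalently, the multiplication formulas of Palu for $X^T_?$ are valid for every cluster-tilting object $T$. We treat only case (1), $e$ a sink; case (2) follows either by the dual argument with injectives, or by applying the duality $D$, which sends $\mathbb{K}Q$ to $\mathbb{K}Q^{op}$, turns sinks into sources, and reverses the $\tau$-orbits.

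\textbf{Step 1 (tube identities).} Inside the tube $\mathcal{T}$ we use the multiplication formula for almost split sequences, as in \cite[Section 8]{Ding2013-xa}. From $X_{R_i[j]}X_{R_{i+1}[j]} = X_{R_i[j+1]}X_{R_{i+1}[j-1]}+1$ one gets that the differences $X_{R_i[kn+j]} - X_{R_{i+1}[kn+j-2]}$ behave like the growth coefficients in Proposition~\ref{PropertiesofGrowthCoeff}. In particular, $X_{R_1[n]}-X_{R_2[n-2]}$ does not depend on the starting index $i$. We may therefore choose the convenient index for which $R_i[n]$ relates to $\alpha_n$, using that the dimension vector of $R_i[n]$ is $\delta$ for every $i$.

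\textbf{Step 2 (compare with the homogeneous tube).} We compute $X_{M_\lambda}$ with Theorem~\ref{ParameterizationOfTubes}. In $\mathcal{C}(Q)$ we have $\dim \mathrm{Ext}^1(M_\lambda,P_e)=1$, since $\langle p,\delta\rangle=1$. The two exchange triangles are
\[
P_e\to E\to M_\lambda \qquad\text{and}\qquad M_\lambda\to \mathscr I_1\to P_e[1] ,
\]
where the middle term of the second triangle is the preinjective module of dimension vector $\delta-p$. The cluster multiplication theorem (Caldero--Keller, Palu) then gives
\[
X_{P_e}X_{M_\lambda}=X_E+X_{\mathscr I_1}.
\]
Since neither $E$ nor $\mathscr I_1$ depends on $\lambda$, this shows that $X_{M_\lambda}$ is independent of $\lambda$.

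The same recipe applies to the non-homogeneous tube. Here $M$ is the module of $\Omega$ lying in $\mathcal{T}$, namely $R_1[n]$ after relabelling so that its quasi-top is $R_n$ with $\alpha_n(e)\ne0$. The triangles $P_e\to E\to R_1[n]$ and $R_1[n]\to Y\to P_e[1]$ have a second middle term $Y$ which now decomposes. Its summands should be $\mathscr I_2$ of dimension vector $\alpha_n-p$ together with $R_2[n-2]$, or at least some expression involving them. This yields
\[
X_{P_e}X_{R_1[n]}=X_E+X_{\mathscr I_2}X_{R_2[n-2]}\cdot(\text{correction}).
\]

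\textbf{Step 3 (combine and identify the main obstacle).} We then need an identity for $X_{P_e}X_{R_2[n-2]}$ of the form $X_{\mathscr I_2}X_{R_2[n-2]}+\dots$. Subtracting the relations from Step 2 gives
\[
X_{P_e}\bigl(X_{R_1[n]}-X_{R_2[n-2]}\bigr)=X_{P_e}X_{M_\lambda}.
\]
Cancelling the cluster variable $X_{P_e}$ in the integral domain then finishes the proof.

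The main obstacle is this precise identification of the middle terms of the triangles for $R_1[n]$ and for $R_2[n-2]$ against $P_e$. It requires computing $\mathrm{Hom}$ and $\mathrm{Ext}$ between $P_e$ and the modules $R_i[j]$ using $\alpha_i(e)$; since $\sum_i\alpha_i(e)=\delta(e)=1$, exactly one $\alpha_i(e)$ is nonzero. If this approach does not close up, the fallback is to compare the Caldero--Chapoton sums directly: submodules of $R_1[n]$ either lie in $R_2[n-2]$-type strata or contribute exactly the quiver Grassmannian terms of $M_\lambda$, using that $R_1[n]$ and $M_\lambda$ share the dimension vector $\delta$. This would reproduce the computation of \cite[Section 8]{Ding2013-xa}.
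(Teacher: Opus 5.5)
Your strategy is the paper's: exchange relations against $P_e$ for the tube and for a homogeneous tube, then cancel $X^T_{P_e}$. However, the proposal stops exactly where the proof happens, and the second exchange triangles you write down are wrong.

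In $\mathcal C(Q)$ one has $\Sigma\cong\tau$, and since $e$ is a sink, $P_e=S_e$. For the pair $(M_\lambda,P_e)$ the second exchange triangle is $M_\lambda\to\tau^{-1}\mathscr I_1\to P_e\to\Sigma M_\lambda$. Indeed, the cokernel of $P_e\hookrightarrow M_\lambda\cong\Sigma M_\lambda$ is $\mathscr I_1$, and desuspending gives $\tau^{-1}\mathscr I_1$. So the relation is $X^T_{P_e}X^T_{M_\lambda}=X^T_E+X^T_{\tau^{-1}\mathscr I_1}$. Your triangle $M_\lambda\to\mathscr I_1\to P_e[1]$ is only the rotated short exact sequence $0\to P_e\to M_\lambda\to\mathscr I_1\to 0$, not an exchange triangle for this pair.

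Likewise, for the tube you must use $\Sigma R_1[n]\cong R_n[n]$. The cokernel of $P_e\hookrightarrow R_n\subseteq R_n[n]$ is an extension of $R_1[n-1]$ by $\mathscr I_2=R_n/P_e$. It splits because $\mathrm{Ext}^1(R_1[n-1],\mathscr I_2)\cong D\mathrm{Hom}(\mathscr I_2,\tau R_1[n-1])=0$. Hence the middle term is $R_2[n-1]\oplus\tau^{-1}\mathscr I_2$, not $\mathscr I_2\oplus R_2[n-2]$, and
\[
X^T_{P_e}X^T_{R_1[n]}=X^T_E+X^T_{R_2[n-1]}\,X^T_{\tau^{-1}\mathscr I_2},
\]
with no correction factor.

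The missing idea is a second exchange pair, $(\tau^{-1}\mathscr I_2,R_2[n-1])$, whose $\mathrm{Ext}^1_{\mathcal C}$ is one-dimensional. Its two exchange triangles have middle terms:
\begin{itemize}
\item $\tau^{-1}\mathscr I_1$, coming from the non-split extension $0\to R_1[n-1]\to\mathscr I_1\to\mathscr I_2\to 0$;
\item $R_2[n-2]\oplus P_e$, the kernel of $R_2[n-1]\twoheadrightarrow R_n\twoheadrightarrow\mathscr I_2$, which splits because $P_e$ is projective.
\end{itemize}
This gives
\[
X^T_{R_2[n-1]}X^T_{\tau^{-1}\mathscr I_2}=X^T_{P_e}X^T_{R_2[n-2]}+X^T_{\tau^{-1}\mathscr I_1}.
\]
Only after substituting this does $X^T_{P_e}$ factor out of the product term. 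The leftover $X^T_E+X^T_{\tau^{-1}\mathscr I_1}$ is exactly the right-hand side of the homogeneous relation, so $X^T_{P_e}X^T_{R_1[n]}=X^T_{P_e}X^T_{M_\lambda}+X^T_{P_e}X^T_{R_2[n-2]}$, and cancelling $X^T_{P_e}$ finishes.

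The relation you ask for instead (an identity for $X_{P_e}X_{R_2[n-2]}$ of the form $X_{\mathscr I_2}X_{R_2[n-2]}+\dots$) has the wrong shape. So your Step 3 subtraction is asserted rather than derived. Step 1 is unnecessary, and the Grassmannian-comparison fallback is not an argument.
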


\begin{proof}
    Let $T\in\mathrm{obj}(\mathcal{C}(Q))$ be a basic cluster-tilting object. First assume that $e$ is a sink. By \cite[section 8]{Ding2013-xa}, we get the following triangles in $\mathcal{C}(Q)$.
    \[
        P_{e}\rightarrow E\rightarrow R_{1}[n]\rightarrow \Sigma P_{e}
    \]
    \[
        R_{1}[n]\rightarrow R_{2}[n-1]\oplus \tau^{-1}\mathscr{I}_{2}\rightarrow P_{e}\rightarrow \Sigma (R_{1}[n])
    \]
    with $\dim \mathrm{Ext}^{1}_{\mathcal{C}(Q)}(R_1[n],P_{e})=1$. Hence, by Theorem \ref{multCK2}, we get
    \begin{align}
    \label{identity 1}
        X^{T}_{P_{e}}X^{T}_{R_{1}[n]} =X^{T}_{E} + X^{T}_{R_{2}[n-1]}X^{T}_{\tau^{-1}\mathscr{I}_{2}}
    \end{align}
    We also have
    \[
        \begin{tikzcd}
            R_2[n-1] \arrow[r] & \tau^{-1}\mathscr{I}_{1} \arrow[r] & \tau^{-1}\mathscr{I}_2 \arrow[r] & R_1[n-1]
        \end{tikzcd}
    \]
    \[
        \begin{tikzcd}
            \tau^{-1}\mathscr{I}_2 \arrow[r] & R_2[n-2] \oplus P_e \arrow[r] & R_2[n-1] \arrow[r] & \mathscr{I}_2
        \end{tikzcd}
    \]
    with $\dim\mathrm{Ext}_{\mathcal{C}(Q)}^1(\tau^{-1}\mathscr{I}_{2}, R_{2}[n-1])=1$. Hence, by using Theorem \ref{multCK2} we get that
    \begin{align}
    \label{identity 2}
        X^{T}_{R_{2}[n-1]}X^{T}_{\tau^{-1}\mathscr{I}_{2}}= X^{T}_{R_{2}[n-2]}X^{T}_{P_{e}}+X^{T}_{\tau^{-1}\mathscr{I}_{1}}
    \end{align}
    For any homogeneous stable tube $(\mathcal{H_{\lambda}},\tau)$ we get the triangles
    \[
        P_{e}\rightarrow E\rightarrow M_{\lambda}\rightarrow \Sigma P_{e}
    \]
    \[
        M_{\lambda}\rightarrow\tau^{-1}\mathscr{I}_{1}\rightarrow P_{e}\rightarrow M_{\lambda}
    \]
    with $\dim\mathrm{Ext}^{1}_{\mathcal{C}(Q)}(M_{\lambda},P_{e})=1$, independent of the choice $\lambda$. 
    Hence, by Theorem \ref{multCK2} we get
    \begin{align}
    \label{identity 3}                    X^{T}_{P_{e}}X^{T}_{M_{\lambda}}=X^{T}_{E}+X^{T}_{\tau^{-1}\mathscr{I}_{1}}
    \end{align}
    By using \eqref{identity 1}, \eqref{identity 2}, and \eqref{identity 3}, we get
    \[
        X^{T}_{P_{e}}X^{T}_{R_{1}[n]} = X^{T}_{P_{e}}X^{T}_{M_{\lambda}} + X^{T}_{R_{2}[n-2]}X^{T}_{P_{e}}
    \]
    Hence,
    \[
        X^{T}_{R_{1}[n]}-X^{T}_{R_{2}[n-2]}=X^{T}_{M_{\lambda}}.
    \]
    The proof of the second statement (i.e. for the case where $e$ is a source) follows by a similar argument, employing the triangles constructed in \cite[Section 8]{Ding2013-xa}. 

\end{proof}

\begin{remark}
\label{relatingBraceletsandMlambda}
    By Proposition \ref{InterpretationOfGrowth}, together with~\cite[Proposition 6.1]{GMV19} in 
    affine type $A$ and~\cite[Proposition 3.5]{BBGTY} in affine type $D$ along with the skein relations~\cite[Proposition 4.2]{MSW13}, we get that \( X(\text{Brac}_1) = X_{M_\lambda} \), where we write \(\text{Brac}_k\) to denote the $k$-bracelet as defined in~\cite[Definition 3.16]{MSW13}, and 
    where \(X(\text{Brac}_k)\) is the associated 
    cluster algebra element. This agrees with Remarks~\ref{br_a} and~\ref{br-d}. 
\end{remark}

Using Proposition~\ref{InterpretationOfGrowth} we can give an interpretation of the \(k\)-th growth coefficient of infinite frieze patterns of affine type as we show next.  

Let $(\mathcal{H}_{\lambda},\tau)$ be a homogeneous tube in $\Gamma(\mathbb{K}Q\mod)$, for $\lambda\in\mathbb{P}^{1}$. 
We write $M_{\lambda}$ to denote the quasi-simple at the mouth of $\mathcal{H}_{\lambda}$ and  $M_{\lambda}[m]$ to denote the indecomposable module in $\mathcal{H}_{\lambda}$ with quasi-socle $M_{\lambda}$ and quasi-length $m\in\mathbb{N}$. We adopt the convention that \( M_{\lambda}[0] \cong 0 \). Consequently, we set $X^{T}_{M_{\lambda}[0]} = 1$, and $X^{T}_{M_{\lambda}[-1]} = 0$ and $X^{T}_{M_{\lambda}[-2]}=-1$. Our goal is to understand the \(k\)-th growth coefficient via \( X^{T}_{M_{\lambda}[k]} \). Before doing so, we establish the following recurrence relation satisfied by \( X^{T}_{M_{\lambda}[k]} \).

\begin{lemma}
\label{ChebyshevRecurRelnForMlambda}
    The following recurrence relation holds in $(\mathcal{H}_{\lambda},\tau)$
    \[
        X^{T}_{M_{\lambda}[k+1]}=X^{T}_{M_{\lambda}}X^{T}_{M_{\lambda}[k]}-X^{T}_{M_{\lambda}[k-1]},\quad\text{for}\;k\in\mathbb{N}
    \]
\end{lemma}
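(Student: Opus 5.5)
We use the cluster multiplication formula, Theorem~\ref{multCK2}, applied to the pair $M_\lambda$ and $M_\lambda[k]$ inside the homogeneous tube $\mathcal{H}_\lambda$. The case $k=0$ is immediate from the conventions $X^T_{M_\lambda[0]}=1$ and $X^T_{M_\lambda[-1]}=0$, so assume $k\ge 1$.

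First, in $\mathcal{H}_\lambda$ we have $\tau M_\lambda[j]\cong M_\lambda[j]$ for all $j$. The standard almost split sequence gives the exact sequence
\[
0\to M_\lambda[k]\to M_\lambda[k+1]\oplus M_\lambda[k-1]\to M_\lambda[k]\to 0,
\]
and more generally the exact sequence
\[
0\to M_\lambda\to M_\lambda[k+1]\to M_\lambda[k]\to 0 .
\]
Dually, using $\tau M_\lambda\cong M_\lambda$, there is an exact sequence
\[
0\to M_\lambda[k]\to M_\lambda[k-1]\to M_\lambda\to 0
\]
in the sense of triangles in $\mathcal{C}(Q)$. More precisely, the inclusion $M_\lambda[k-1]\hookrightarrow M_\lambda[k]$ has cokernel $M_\lambda$ when we use quasi-tops. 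The two non-split triangles we want are
\[
M_\lambda\to M_\lambda[k+1]\to M_\lambda[k]\to \Sigma M_\lambda,
\qquad
M_\lambda[k]\to M_\lambda[k-1]\to M_\lambda\to \Sigma M_\lambda[k].
\]
The first comes from the module short exact sequence with $M_\lambda$ as quasi-socle. The second is obtained in $\mathcal{C}(Q)$ from the short exact sequence $0\to M_\lambda[k-1]\to M_\lambda[k]\to M_\lambda\to 0$ (quasi-top) by rotating and using $\Sigma\cong\tau$ on regular objects, together with $\tau$-invariance of the homogeneous tube.

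Second, we verify the dimension hypothesis of Theorem~\ref{multCK2}, namely $\dim\mathrm{Ext}^1_{\mathcal{C}(Q)}(M_\lambda[k],M_\lambda)=1$. In the cluster category, $\mathrm{Ext}^1_{\mathcal{C}}(X,Y)\cong \mathrm{Ext}^1(X,Y)\oplus D\mathrm{Ext}^1(Y,X)$ for modules $X,Y$. By Auslander--Reiten duality in a homogeneous tube, $\mathrm{Ext}^1(M_\lambda[k],M_\lambda)\cong D\mathrm{Hom}(M_\lambda,M_\lambda[k])$, which is one-dimensional since $M_\lambda$ occurs only as the quasi-socle. Similarly $\mathrm{Ext}^1(M_\lambda,M_\lambda[k])\cong D\mathrm{Hom}(M_\lambda[k],M_\lambda)$ is one-dimensional. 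So the total $\mathrm{Ext}^1_{\mathcal{C}}$ would be two-dimensional, and this step is the main obstacle.

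To handle it, we expect the $\mathrm{Ext}^1_{\mathcal{C}}$ to be $2$-dimensional, not $1$. Theorem~\ref{multCK2} in its one-dimensional form then does not apply directly. We would first try to check whether the paper's Theorem~\ref{multCK2} is the general Palu / Xiao--Xu version. If not, we would instead argue as follows. The CC-map depends only on the dimension vector and on Grassmannian Euler characteristics, and these are constant in the family $\{M_\lambda[k]\}_\lambda$ over homogeneous $\lambda$. Hence we may compute $X^T_{M_\lambda}X^T_{M_\lambda[k]}$ as $X^T_{M_\lambda}X^T_{M_\mu[k]}$ with $\mu\neq\lambda$ both homogeneous. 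Then $\mathrm{Ext}^1_{\mathcal{C}}(M_\lambda,M_\mu[k])=0$, so the product equals $X^T_{M_\lambda\oplus M_\mu[k]}$. We then use the Grassmannian stratification $\chi(\mathrm{Gr}_e(M_\lambda[k+1]))=\chi(\mathrm{Gr}_e(M_\lambda\oplus M_\lambda[k]))-\chi(\mathrm{Gr}_e(M_\lambda[k-1]))$, which follows from the uniserial-by-quasi-length structure of the tube (the standard Chebyshev argument of Dupont for generic/homogeneous variables). Combined with the fact that $\underline{\dim}\,M_\lambda[k\pm1]$ differ from $\underline{\dim}\,M_\lambda[k]$ by $\pm\delta$ with $\langle\delta,\delta\rangle=0$, so that the monomial prefactors match, this yields
\[
X^T_{M_\lambda}X^T_{M_\lambda[k]}=X^T_{M_\lambda[k+1]}+X^T_{M_\lambda[k-1]}.
\]
For a general cluster-tilting object $T$, we reduce to $T=\mathbb{K}Q$ by noting that the identity is a cluster-algebra identity, transported by the change-of-seed isomorphism compatible with the CC-maps.
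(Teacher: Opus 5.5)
Your diagnosis of the first attempt is correct: $\dim\mathrm{Ext}^1_{\mathcal{C}(Q)}(M_\lambda[k],M_\lambda)=2$, so the one-dimensional multiplication formula cannot be applied to the pair $(M_\lambda,M_\lambda[k])$. The fallback argument, however, has a genuine gap.

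The detour through $\mu\neq\lambda$ accomplishes nothing, because $X^T_AX^T_B=X^T_{A\oplus B}$ holds for any cluster character whether or not $\mathrm{Ext}^1$ vanishes. So everything rests on the asserted Grassmannian identity.

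As you wrote it, with the same $e$ in all three terms, it fails already at $e=0$, where it reads $1=1-1$. What you would actually need is the shifted identity
\[
\chi(\Gr_e(M_\lambda\oplus M_\lambda[k]))=\chi(\Gr_e(M_\lambda[k+1]))+\chi(\Gr_{e-\delta}(M_\lambda[k-1])).
\]
Matching the monomials then requires $\delta$ to lie in the radical of the symmetrized Euler form, i.e.\ $\langle\delta,\alpha_i\rangle+\langle\alpha_i,\delta\rangle=0$ for all $i$, not merely $\langle\delta,\delta\rangle=0$.

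More importantly, for $T=\mathbb{K}Q$ this identity is a term-by-term strengthening of the lemma itself, so asserting it leaves the lemma unproved. It does not follow from the uniserial structure of the tube. $\Gr_e$ parametrizes all subrepresentations in $\mathbb{K}Q\mod$, and these in general do not lie in $\mathcal{H}_\lambda$ (e.g.\ preprojective subrepresentations). Proving it needs a Caldero--Keller type stratification of $\Gr_e(M_\lambda\oplus M_\lambda[k])$ that copes with both $\mathrm{Ext}^1(M_\lambda,M_\lambda[k])$ and $\mathrm{Ext}^1(M_\lambda[k],M_\lambda)$ being nonzero. That is essentially the higher-dimensional multiplication formula of Xiao--Xu/Palu, and appealing to ``Dupont's Chebyshev argument'' amounts to citing the statement you are proving. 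Two further steps are only asserted: the constancy of $\chi(\Gr_e(M_\lambda[k]))$ in $\lambda$ (which you do not even need) and the transport from $\mathbb{K}Q$ to an arbitrary $T$ for non-rigid objects.

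The paper never multiplies $X^T_{M_\lambda}$ by $X^T_{M_\lambda[k]}$. From the almost split sequence $0\to M_\lambda[t-1]\to M_\lambda[t]\oplus M_\lambda[t-2]\to M_\lambda[t-1]\to 0$ it takes the exchange relation $(X^T_{M_\lambda[t-1]})^2=X^T_{M_\lambda[t]}X^T_{M_\lambda[t-2]}+1$ for $t\ge 2$. It then obtains the three-term recurrence by a purely algebraic induction: write $X^T_{M_\lambda[k+1]}$ via the exchange relation, replace one factor $X^T_{M_\lambda[k]}$ in the numerator by the inductive hypothesis, and use the exchange relation at $t=k$ to cancel.

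This exchange relation is the almost split multiplication formula for cluster characters, not an instance of the one-dimensional axiom, since $\dim\mathrm{Ext}^1_{\mathcal{C}(Q)}(M_\lambda[t-1],M_\lambda[t-1])=2(t-1)$. So the cleanest repair is to cite that formula and run the paper's induction. Alternatively, apply the general multiplication formula to $(M_\lambda[k],M_\lambda)$; this requires identifying the middle terms of all classes in $\mathbb{P}\mathrm{Ext}^1_{\mathcal{C}(Q)}(M_\lambda[k],M_\lambda)\cong\mathbb{P}^1$.
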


\begin{proof}
    By using the fact that $\mathcal{H}_{\lambda}$ is a stable tube of rank $1$, and ~\cite[Theorem 2.2, Chapter X]{SS07}, we have the following almost-split sequence
    \[
        0\rightarrow M_{\lambda}[t-1]\rightarrow M_{\lambda}[t]\oplus M_{\lambda}[t-2]\rightarrow M_{\lambda}[t-1]\rightarrow 0\quad\text{for}\;t\geq2
    \]
    Therefore, we get the following relation for the $CC$-map
    \begin{equation}
    \label{XMlambda}
        X^{T}_{M_{\lambda}[t]}=\frac{X^{T}_{M_{\lambda}[t-1]}X^{T}_{M_{\lambda}[t-1]}-1}{X^{T}_{M_{\lambda}[t-2]}},\quad\text{for}\;t\geq2
    \end{equation}
    Now, to prove the recurrence relation we apply induction on $k$. For $k=1$, by using the relation (\ref{XMlambda}) for $t=2$, we get $X^{T}_{M_{\lambda}[2]}=X^{T}_{M_{\lambda}}X^{T}_{M_{\lambda}}-1=X^{T}_{M_{\lambda}}X^{T}_{M_{\lambda}}-X^{T}_{M_{\lambda}[0]}$. Assume the recurrence relation is true for $k$, i.e., $X^{T}_{M_{\lambda}[k]}=X^{T}_{M_{\lambda}}X^{T}_{M_{\lambda}[k-1]}-X^{T}_{M_{\lambda}[k-2]}$. We will now prove the recurrence relation for $k+1$. Using the relation (\ref{XMlambda}) for $t=k+1$, we get
    \begin{align*}
        X^{T}_{M_{\lambda}[k+1]}&=\frac{X^{T}_{M_{\lambda}[k]}X^{T}_{M_{\lambda}[k]}-1}{X^{T}_{M_{\lambda}[k-1]}}\\
        &=\frac{X^{T}_{M_{\lambda}[k]}(X^{T}_{M_{\lambda}}X^{T}_{M_{\lambda}[k-1]}-X^{T}_{M_{\lambda}[k-2]})-1}{X^{T}_{M_{\lambda}[k-1]}}\\
        &=\frac{X^{T}_{M_{\lambda}}X^{T}_{M_{\lambda}[k]}X^{T}_{M_{\lambda}[k-1]}-(X^{T}_{M_{\lambda}[k]}X^{T}_{M_{\lambda}[k-2]}+1)}{X^{T}_{M_{\lambda}[k-1]}}\\
        &=\frac{X^{T}_{M_{\lambda}}X^{T}_{M_{\lambda}[k]}X^{T}_{M_{\lambda}[k-1]}-(X^{T}_{M_{\lambda}[k-1]})^{2}}{X^{T}_{M_{\lambda}[k-1]}},\quad\text{by using relation \eqref{XMlambda} for $t=k$}\\
        &=X^{T}_{M_{\lambda}}X^{T}_{M_{\lambda}[k]}-X^{T}_{M_{\lambda}[k-1]}
    \end{align*}
\end{proof}

\begin{remark}
\label{XMlambdaSecondKind}
    By Lemma~\ref{ChebyshevRecurRelnForMlambda}, the $X^{T}_{M_{\lambda}[k]}$ satisfy the normalized Chebyshev polynomials of second kind.
\end{remark}

For any $M\in\mathrm{obj}(\mathcal{C}(Q))$, we denote by $x^{T}_{M}\coloneqq \restr{X^{T}_{M}}{\lbrace x_{i}=1\rbrace}$, i.e., the integer obtained by specializing $X^{T}_{M}$ at $x_{i}=1$. Using the same notation as above, notice that if $\mathcal{F}_{\mathcal{T}}$ is the infinite frieze pattern coming from a non-homogeneous stable tube $(\mathcal{T},\tau)$, then for $k\in\mathbb{N}$ the $k$-th growth coefficient is $s_{k}(\mathcal
{F}_{\mathcal{T}})=x^{T}_{R_1[kn]}-x^{T}_{R_{2}[kn-2]}$. Recall that we adopted the convention that $R_i[0] \cong 0$, we thus set  $x^{T}_{R_i[0]} = 1$ and we extend this to the negative integers $-1,-2$ by $x^{T}_{R_i[-1]} = 0$ and $x^{T}_{R_{i}[-2]}=-1$.

\begin{theorem}
\label{UnderstandingHigherGrowthCoeff}
    Let \( Q \) be an affine quiver and let \( \mathcal{C}(Q) \) be its associated cluster category. Fix a cluster-tilting object $T\in\mathcal{C}(Q)$. The 
    growth coefficients 
    of the infinite frieze patterns $\F_{\mathcal T}$ of all tubes 
    of $\mathcal{C}(Q)$ are the same. Furthermore, the $k$-th growth coefficient is given by the integer
    \[
        s_k(\mathcal{F}_{\mathcal{T}}) = x^{T}_{M_{\lambda}[k]}-x^{T}_{M_{\lambda}[k-2]},
    \]
    where \( M_{\lambda}[j] \) is the indecomposable module of quasi-length \( j \) in any stable homogeneous tube \( (\mathcal{H}_{\lambda}, \tau) \) of \( \mathcal{C}(Q) \), and \( x^{T}_{M_{\lambda}[k]} \) is the integer obtained by specializing \( X^{T}_{M_{\lambda}[k]} \) at \( x_i = 1 \).

\end{theorem}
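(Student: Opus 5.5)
Let $\mathcal T$ be any stable tube of $\mathcal{C}(Q)$, homogeneous or not, of rank $n\ge 1$. The plan is to reduce every growth coefficient to the principal one and then invoke Chebyshev identities.

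\emph{Principal growth coefficient.} For a non-homogeneous tube $\mathcal T$ of rank $n>1$, Proposition~\ref{InterpretationOfGrowth} gives, in the sink case, $X^{T}_{R_1[n]}-X^{T}_{R_2[n-2]}=X^{T}_{M_\lambda}$. The source case gives the analogous identity after the cyclic relabelling of the quasi-simples, and this relabelling does not affect the frieze pattern up to shift. Specializing all initial variables to $1$ yields
\[
s_1(\mathcal F_{\mathcal T})=x^{T}_{R_1[n]}-x^{T}_{R_2[n-2]}=x^{T}_{M_\lambda}.
\]
For a homogeneous tube $\mathcal H_\mu$ ($n=1$), the frieze pattern has quiddity row $x^T_{M_\mu}$. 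Hence
\[
s_1=x^T_{M_\mu[1]}-x^T_{M_\mu[-1]}=x^T_{M_\mu}.
\]
By Proposition~\ref{InterpretationOfGrowth}, $X^T_{M_\mu}$ is independent of $\mu$, since it equals the same difference computed in a fixed non-homogeneous tube. Alternatively, identity \eqref{identity 3} gives $X^T_{M_\mu}=(X^T_E+X^T_{\tau^{-1}\mathscr I_1})/X^T_{P_e}$ for every $\mu$. Either way, all tubes share the principal growth coefficient $x:=x^T_{M_\lambda}$.

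\emph{Higher growth coefficients.} By Proposition~\ref{PropertiesofGrowthCoeff}(2) and Remark~\ref{rem:Cheby1-2}, $s_k(\mathcal F_{\mathcal T})=T_k(s_1(\mathcal F_{\mathcal T}))=T_k(x)$ for every tube. In particular all $s_k$ agree across tubes, which proves the first claim. For the formula, Lemma~\ref{ChebyshevRecurRelnForMlambda} together with the conventions $X^T_{M_\lambda[0]}=1$ and $X^T_{M_\lambda[-1]}=0$ shows that $X^T_{M_\lambda[k]}=S_k(X^T_{M_\lambda})$, as in Remark~\ref{XMlambdaSecondKind}. This also holds at $k=-2$, where the convention $X^T_{M_\lambda[-2]}=-1$ matches $S_{-2}=-1$. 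Specialization at $x_i=1$ is a ring homomorphism, so $x^T_{M_\lambda[k]}=S_k(x)$. The identity $T_k=S_k-S_{k-2}$ then gives
\[
s_k(\mathcal F_{\mathcal T})=S_k(x)-S_{k-2}(x)=x^T_{M_\lambda[k]}-x^T_{M_\lambda[k-2]}.
\]

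\emph{Main obstacle.} The only delicate point is the first step. One must make sure that Proposition~\ref{InterpretationOfGrowth} really computes the growth coefficient as defined, namely $x_{1,n}$ minus the entry above it. This requires the entries $x_{i,j}$ of $\mathcal F_{\mathcal T}$ to be exactly the specialized CC-values of $R_i[j-i+1]$ under the appropriate indexing. To verify this, note that the mesh relations in $\mathcal T$ are the diamond rule: by Theorem~\ref{multCK2} applied to almost split triangles, $X_{R_i[j]}X_{R_{i+1}[j]}=X_{R_i[j+1]}X_{R_{i+1}[j-1]}+1$. Together with the quiddity row $x^T_{R_i}$, this forces the identification, and hence the equality $s_1=x^T_{R_1[n]}-x^T_{R_2[n-2]}$ that is already stated before the theorem. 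The remaining steps are formal Chebyshev manipulations.
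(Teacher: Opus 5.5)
Your proposal is correct and follows the paper's own argument: Proposition~\ref{InterpretationOfGrowth} gives $s_1(\F_{\mathcal T})=x^T_{M_\lambda}$ independently of $\lambda$, and combining $s_k=T_k(s_1)$ (Remark~\ref{rem:Cheby1-2}), $x^T_{M_\lambda[k]}=S_k(x^T_{M_\lambda})$ (Lemma~\ref{ChebyshevRecurRelnForMlambda}) and $T_k=S_k-S_{k-2}$ yields the formula. Your extra care with the source case, the homogeneous tubes and the identification of frieze entries with CC-values only makes explicit what the paper leaves implicit, with one small imprecision: for quasi-length $j\ge n-1$ one has $\dim\mathrm{Ext}^1_{\mathcal{C}(Q)}(R_{i+1}[j],R_i[j])\ge 2$, so that mesh relation is not an instance of the one-dimensional-$\mathrm{Ext}$ cluster-character axiom but of the standard almost-split-triangle identity $X_{\tau M}X_M=X_B+1$, which the paper also uses tacitly in Lemma~\ref{ChebyshevRecurRelnForMlambda}.
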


\begin{proof} 
    Let $(\mathcal{T}_i,\tau)_{1\le i\le l}$ be the non-homogeneous tubes in $\Gamma(\mathbb{K}Q\mod)$, where $l\le 3$. Denote the corresponding infinite frieze patterns by $\mathcal{F}_{i}$ for $1\leq i \leq l$. 
    Recall that $s_0(\F_i)=2$ (Proposition~\ref{PropertiesofGrowthCoeff}(2)). In the homogeneous tubes, since 
    $x_{M_\lambda[-2]}^T=-1$ (as defined above), 
    we can write $s_0(\F_i)$ as $x^{T}_{M_{\lambda}[0]}-x^{T}_{M_{\lambda}[-2]}=1-(-1)$. 
    Recalling that $x^{T}_{M_{\lambda}[-1]}=0$ and  using Proposition~\ref{InterpretationOfGrowth}, we can write $s_1(\F_i)=x^T_{R_1[n]}-x^T_{R_2[n-2]}$ as 
    \[
    s_{1}(\mathcal{F}_i)=x^{T}_{M_{\lambda}[1]}-x^{T}_{M_{\lambda}[-1]}=x^{T}_{M_{\lambda}}
    \]
    which is independent of the choice of $\lambda$. Therefore, the initial conditions for the growth coefficients of the frieze pattern $\F_i$ and of the frieze pattern of any homogeneous tube are the same. 
    Now by using the relationship between the normalized Chebyshev polynomials of the first and second kinds, along with Remark~\ref{rem:Cheby1-2} and Remark~\ref{XMlambdaSecondKind}, we obtain the identity $s_k(\mathcal{F}_i) = x^{T}_{M_{\lambda}[k]} - x^{T}_{M_{\lambda}[k-2]}$.

    As a consequence, the statement for the 
    homogeneous tubes follows. 
\end{proof}

\begin{remark}
\label{kbraceletsandXMlambda[k]}
    Along with Remark \ref{relatingBraceletsandMlambda} and Remark \ref{XMlambdaSecondKind}, we get that the cluster algebra element $X(\text{Brac}_{k})$ is equal to the difference $X_{M_{\lambda}[k]}-X_{M_{\lambda}[k-2]}$.
\end{remark}

Now we use the above Theorem \ref{UnderstandingHigherGrowthCoeff} to give a representation theoretic interpretation of growth coefficients studied in Section \ref{sec:cl-alg-mod-gr}.

Let $Q$ be an affine quiver of type $\widetilde{A}$, $\widetilde{D}$ or $\widetilde{E}$. 
There exists a mutation sequence from $Q$ to one of the quivers in the Figures~\ref{fig-amn},\,\ref{fig-dn},\,\ref{fig-e}, we denote the resulting quiver 
by $\Gamma$. 
Applying the above mutation sequence starting from $\mathbb{K}Q\cong \oplus_{i=1}^{m}P_i$, where $P_i$ is the indecomposable projective $\mathbb{K}Q$-module at vertex $i$, we get a basic cluster-tilting object $T$ in the cluster category $\mathcal{C}(Q)$. Let $\mathcal{B}=\mathrm{End}_{\mathcal{C}(Q)}(T)$ be the cluster-tilted algebra. It is known that the quiver of $\mathcal{B}$ is equal to $\Gamma$. 
In particular, $\mathcal{B}=\KK\Gamma/J$ for some admissible ideal $J\subseteq\KK\Gamma$. Denote by $\rmod$ the category of finite-dimensional right $\mathcal{B}$-modules. By~\cite{BuanCluster-titlted,KellerReitenCluster-tilted} we know that the functor 
\[
    \varphi : \mathcal{C}(Q)  \rightarrow  \rmod \quad\text{given by} \quad X  \mapsto \mathrm{Hom}_{\mathcal{C}(Q)}(T,X)
\]
induces an equivalence of categories $\mathcal{C}(Q)/\langle\Sigma T\rangle \cong \rmod$, where $\langle\Sigma T\rangle$ denotes the ideal of morphisms of $\mathcal{C}(Q)$ which factor through a direct sum of copies of $\Sigma T$. Let $\Lambda=\KK\Delta$ be the path algebra of the Kronecker subquiver of $\Gamma$ supported on the vertices $1$ and $3$. 
We write $N_{\nu}$ to denote 
the indecomposable $\Lambda$-module below: 
\[
\Delta:\;\;
    \begin{tikzcd}
	   1 \\
	   0
	   \arrow[shift right, from=2-1, to=1-1]
	   \arrow[shift left, from=2-1, to=1-1]
    \end{tikzcd}
    \quad\quad\quad\quad\quad
    N_{\nu}:\;\;
    \begin{tikzcd}
	   {\KK} \\
	   {\KK}
	   \arrow["1"', shift right, from=2-1, to=1-1]
	   \arrow["\nu", shift left, from=2-1, to=1-1]
    \end{tikzcd} 
    \quad\quad\quad\quad\quad
    N_{\infty}:\;\;
    \begin{tikzcd}
	   {\KK} \\
	   {\KK}
	   \arrow["0"', shift right, from=2-1, to=1-1]
	   \arrow["1", shift left, from=2-1, to=1-1]
    \end{tikzcd}  
\]
Since $J$ is an admissible ideal it does not affect the subquiver $\Delta$ and therefore,  
every module in 
$\lbrace N_\nu\rbrace_{\nu\in\mathbb{P}^1}$ is also a $\mathcal{B}$-module.

We recall that a module $M$ is 
\textit{homogeneous} if 
$\tau M\cong M$.

\begin{lemma}
\label{HomModOfB}
The functor $\varphi$ permutes the family 
$\{M_\lambda\}_\lambda$ and almost all $N_\nu$ 
are images of the $M_\lambda$. 
In particular, there are infinitely many $\lambda\in \mathbb{P}^1$ such that 
$\varphi(M_\lambda)\cong N_\nu$ for some 
$\nu=\nu(\lambda)$. 

\end{lemma}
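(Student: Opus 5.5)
We compare the two one-parameter families of modules of ``dimension'' $\delta$-type on both sides of the equivalence $\mathcal{C}(Q)/\langle\Sigma T\rangle\cong\rmod$. The plan is to show three things in turn: $\varphi(M_\lambda)$ is indecomposable and homogeneous; all but finitely many of these images are concentrated on the Kronecker subquiver $\Delta$; and $\varphi$ is injective on the family.

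\emph{Step 1: the images are indecomposable and $\tau_{\mathcal B}$-invariant.} The homogeneous tubes $\mathcal H_\lambda$ are pairwise orthogonal, and only finitely many indecomposable objects of $\mathcal{C}(Q)$ have a nonzero morphism to or from a summand of $\Sigma T$. Indeed, $T$ has $m$ summands, each lying either in the transjective component or in one of the finitely many tubes, and a transjective summand has nonzero $\mathrm{Hom}$ and $\mathrm{Ext}$ with each homogeneous tube. Hence I expect that for all but finitely many $\lambda$ we have $\mathrm{Ext}^1_{\mathcal{C}(Q)}(T,M_\lambda)=0$ fails only in a controlled way. Rather than relying on that, I would use the following two facts:
\begin{itemize}
\item[(a)] $M_\lambda\notin\operatorname{add}\Sigma T$, since $M_\lambda$ is not rigid ($\mathrm{Ext}^1(M_\lambda,M_\lambda)\neq 0$). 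So $\varphi(M_\lambda)\neq 0$, and it is indecomposable because $\varphi$ induces an equivalence on the quotient.
\item[(b)] The Auslander--Reiten translation of $\rmod$ is induced by $\tau=\Sigma$ of $\mathcal{C}(Q)$ on objects without summands in $\operatorname{add}\Sigma T$ and $\operatorname{add}\Sigma^2T$ (Buan--Marsh--Reiten, Keller--Reiten). Since $\tau M_\lambda\cong M_\lambda$, we get $\tau_{\mathcal B}\varphi(M_\lambda)\cong\varphi(M_\lambda)$.
\end{itemize}
Thus each $\varphi(M_\lambda)$ is a homogeneous $\mathcal B$-module. The almost split sequences of $\mathcal H_\lambda$ map to almost split sequences, so $\varphi(\mathcal H_\lambda)$ is a homogeneous tube of $\rmod$.

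\emph{Step 2: the images lie in the Kronecker family.} Here I would compute dimension vectors. We have $\underline{\dim}\,\varphi(M_\lambda)_i=\dim\mathrm{Hom}_{\mathcal{C}(Q)}(T_i,M_\lambda)$, and this number is independent of $\lambda$ for $\lambda$ outside the finitely many parameters meeting summands of $T$ in tubes. Indeed, for transjective $T_i$ it is given by the Euler form $\langle \underline{\dim}\,T_i,\delta\rangle$ (or its symmetrization in the cluster category), which does not depend on $\lambda$. The plan is to show that this common vector equals $e_0+e_1$, the dimension vector of $N_\nu$. One way is via the CC-map: $X^T_{M_\lambda}=\Theta$ by Proposition~\ref{InterpretationOfGrowth}, Remark~\ref{relatingBraceletsandMlambda} and Propositions~\ref{g_a}, \ref{g_d} and Theorem~\ref{g_e}. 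The leading denominator of $\Theta$ in the seed $T$ is $x_0x_1$, and the denominator vector of $X^T_{M}$ equals $\underline{\dim}\,\varphi(M)$ for such $M$. Hence $\underline{\dim}\,\varphi(M_\lambda)=e_0+e_1$. An indecomposable $\mathcal B$-module with this dimension vector is supported on $\{0,1\}$, which spans the full subquiver $\Delta$ (the double arrow), so it is a $\Lambda$-module. Being indecomposable with dimension vector $(1,1)$, it is some $N_\nu$.

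\emph{Step 3: injectivity.} Distinct $\lambda$ lie in distinct tubes, and $\varphi$ is faithful modulo $\langle\Sigma T\rangle$ on these tubes, so $\varphi(M_\lambda)\not\cong\varphi(M_{\lambda'})$ for $\lambda\neq\lambda'$. Each $N_\nu$ with $\nu$ outside finitely many values (those where $N_\nu$ is not homogeneous in $\rmod$, or lies in the image of a non-homogeneous tube or of $\Sigma T$) has a preimage under the dense functor $\varphi$. That preimage is indecomposable and $\tau$-invariant, hence a quasi-simple $M_\lambda$ or a module $M_\lambda[j]$, and the dimension forces $j=1$. This gives both the ``permutes'' statement and ``almost all''.

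The main obstacle is Step 2, namely the dimension-vector identification. Making the denominator-vector argument rigorous, or alternatively computing $\mathrm{Hom}(T_i,M_\lambda)$ via the Euler form for each summand of $T$, is the delicate part. The remaining steps are formal consequences of the Buan--Marsh--Reiten equivalence.
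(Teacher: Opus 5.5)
There is a genuine gap exactly where you flag it: Step 2 does not establish $\underline{\dim}\,\varphi(M_\lambda)=e_0+e_1$.

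First, the input $X^T_{M_\lambda}=\Theta$ is precisely Proposition~\ref{prop:g_rep}, and its proof in the paper rests on this lemma. Section~\ref{sec:rep-thy-growth} is meant to rederive Propositions~\ref{g_a}, \ref{g_d} and Theorem~\ref{g_e}, so importing them makes the argument circular in this context. For $\widetilde E$ the input is not available at all: Remark~\ref{relatingBraceletsandMlambda} covers only types $\widetilde A,\widetilde D$, and Theorem~\ref{g_e} is a statement about growth values of frieze patterns, not an identity of Laurent polynomials for $X^T_{M_\lambda}$.

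Second, and more seriously, the principle ``the denominator vector of $X^T_M$ equals $\underline{\dim}\,\varphi(M)$'' is not a theorem you can invoke. It is known only under restrictive hypotheses, such as acyclic seeds, or rigid objects with conditions on the regular summands of $T$. Here the seed with quiver $\Gamma$ is in general not acyclic, and $M_\lambda$ is not rigid. Your Euler-form alternative reduces to showing that $T_0,T_1$ are the only transjective summands of $T$ and have defect $\pm1$, and you do not address this.

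Third, in Step 3 you assert without proof that only finitely many $\nu$ have $\tau_{\mathcal B}N_\nu\not\cong N_\nu$. This is a genuine theorem, not a formal consequence of the Buan--Marsh--Reiten equivalence.

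Your Step 3 already contains the mechanism the paper uses. What is missing is the justification of that finiteness claim, and once it is supplied Step 2 becomes unnecessary. The paper cites Crawley-Boevey's Theorem~D: $\mathcal B$ is tame, so in each dimension all but finitely many indecomposable $\mathcal B$-modules are $\tau_{\mathcal B}$-invariant, hence almost all $N_\nu$ are. Pull such an $N_\nu$ back along the dense functor $\varphi$. Compatibility of $\varphi$ with $\tau$ (your (b)) then gives a $\tau$-invariant indecomposable $X\cong M_\lambda[j]$ in a homogeneous tube, exactly as in your Step 3.

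To get $j=1$ without Step 2, use that $\varphi$ maps the almost split sequences of $\mathcal H_\lambda$ to almost split sequences. Hence $\dim\varphi(M_\lambda[j])=j\dim\varphi(M_\lambda)$. Moreover $\dim\varphi(M_\lambda)\geq 2$, because a one-dimensional $\tau_{\mathcal B}$-invariant module would force a loop in $\Gamma$. So $\dim N_\nu=2$ forces $j=1$.

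Distinct $\nu$ give distinct $\lambda$, which proves the lemma. Your correct observation that $\underline{\dim}\,\varphi(M_\lambda)$ does not depend on $\lambda$ then yields $\underline{\dim}\,\varphi(M_\lambda)=e_0+e_1$ for every $\lambda$, as a consequence rather than an input.
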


\begin{proof}
Using~\cite[Proposition 3.2]{BuanCluster-titlted} and the structure of the AR quiver for $\rmod$ we get that $\varphi M_{\lambda}$ is a quasi-simple homogeneous $\mathcal{B}$-module 
and so $\varphi$ permutes the homogeneous tubes. 

By~\cite[Theorem D]{CrawleyBoevey1988OnTA}, 
infinitely many of the $N_\nu$ are 
homogeneous. 
The fact that they are quasi-simple follows by using their dimension vector. 

\end{proof}

\begin{prop}
\label{prop:g_rep} 
For almost all $\lambda\in \mathbb{P}^1$, we 
have $X_{M_{\lambda}}^{T}=
    \begin{cases}
        \frac{x_0}{x_1}+\frac{x_1}{x_0}+\frac{x_ax_bx_c}{x_1x_0} & \text{if}\;Q=\widetilde{D}_{m},\widetilde{E}_{6},\widetilde{E}_{7},\widetilde{E}_{8}\\
        \frac{x_0}{x_1}+\frac{x_1}{x_0}+\frac{x_ax_b}{x_1x_0} & \text{if}\;Q=\widetilde{A}_{m}
    \end{cases}$
\end{prop}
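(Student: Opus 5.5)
Having done the mutation from $Q$ to the double-arrow quiver $\Gamma$, so that $\mathcal B=\mathrm{End}_{\mathcal C(Q)}(T)$ has quiver $\Gamma$, the plan is to compute $X^T_{M_\lambda}$ by the Caldero--Chapoton formula for the cluster-tilted algebra. This formula is taken in the form of Palu / Fu--Keller for 2-Calabi--Yau categories with cluster-tilting object $T$:
\[
X^T_{M}=x^{-\operatorname{ind}_T(M)}\sum_{e}\chi\bigl(\Gr_e(\varphi M)\bigr)\,x^{B e},
\]
where $B$ is the (skew-symmetric) exchange matrix of $\Gamma$ and $\varphi M=\mathrm{Hom}_{\mathcal C(Q)}(T,M)$. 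By Lemma~\ref{HomModOfB}, for all but finitely many $\lambda$ we have $\varphi(M_\lambda)\cong N_\nu$ with $\nu\in\mathbb P^1$ generic. So it suffices to compute the right-hand side for $M$ with $\varphi M= N_\nu$. This module has dimension vector $\varepsilon_0+\varepsilon_1$, supported on the two endpoints of the double arrow, with both arrows acting nonzero in a generic way.

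First I would list the submodules of $N_\nu$. Since $0$ is the source of the Kronecker subquiver $\Delta$, the submodules are $0$, the simple $S_1$ at vertex $1$, and $N_\nu$ itself. The quiver Grassmannians are therefore single points with Euler characteristic $1$ for $e=0,\varepsilon_1,\varepsilon_0+\varepsilon_1$, and empty for $e=\varepsilon_0$. This gives three monomials, $x^{0}$, $x^{B\varepsilon_1}$ and $x^{B(\varepsilon_0+\varepsilon_1)}$. Reading off the column of $B$ from the local picture in Fig.~\ref{fig-amn}(b), Fig.~\ref{fig-dn}(b) or Fig.~\ref{fig-e} gives the following data. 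Vertex $1$ receives two arrows from $0$, and in types $\widetilde D,\widetilde E$ it has arrows to or from $a,b,c$; in type $\widetilde A$ the same holds with $a,b$ only.

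Second, I would compute the index $\operatorname{ind}_T(M_\lambda)$. For this I would take a minimal injective copresentation of $N_\nu$ over $\mathcal B$, or equivalently a minimal $\mathrm{add}\,T$-approximation triangle for $M_\lambda$. Because $N_\nu$ is supported on $\{0,1\}$, the answer should be determined by the arrows at $0$ and $1$. I expect $x^{-\operatorname{ind}_T}=1/(x_0x_1)$ after the combination with the $g$-vector conventions. Multiplying out, the three terms should become $x_1^2$, $x_0^2$ and $x_ax_bx_c$ (respectively $x_ax_b$), all divided by $x_0x_1$, which is the claimed formula. I also need to be careful about which of $x_0,x_1$ plays the role of source, but the final expression is symmetric in $x_0,x_1$, so a swap of labels is harmless.

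The main obstacle is the index computation, together with the correct orientation of the arrows between $\{0,1\}$ and $\{a,b,c\}$. In the quivers of Fig.~\ref{fig-dn}(b) the vertices $a,b,c$ form oriented triangles with the double arrow, so relations in $J$ are present. I must check that the approximations only involve the projectives $P_0,P_1$ and the arrows into $a,b,c$. As a check, I would verify the result in the simplest case, the Kronecker quiver itself ($\widetilde A_{1,1}$, with $x_a=x_b=1$ frozen). There the classical formula $X_{M_\lambda}=\frac{x_0^2+x_1^2+1}{x_0x_1}$ holds, and it agrees with the formula above. As an alternative to the explicit index computation, I could combine Proposition~\ref{InterpretationOfGrowth}, Remark~\ref{relatingBraceletsandMlambda} and Propositions~\ref{g_a}, \ref{g_d} and Theorem~\ref{g_e}. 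These identify $X^T_{M_\lambda}$ with the bracelet element $\Theta$, and $\Theta$ is already computed there in exactly this seed.
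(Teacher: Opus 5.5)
Your overall route is the paper's: pass to $N_\nu\cong\varphi(M_\lambda)$ via Lemma~\ref{HomModOfB}, use that its only submodules are $0$, $S_1$, $N_\nu$, and evaluate the Caldero--Chapoton formula over $\mathcal{B}$. However, the step you leave open, the (co)index, is the substance of the proof, and the value you predict for it is wrong. The summand $e=0$ always equals the prefactor itself, because $\chi(\Gr_0)=1$ and $x^{B\cdot 0}=1$. So the prefactor must be one of the monomials $x_0/x_1$, $x_1/x_0$, $x_ax_bx_c/(x_0x_1)$ of the target. The value $1/(x_0x_1)$ is none of them, so with it your three terms cannot become $x_0^2$, $x_1^2$, $x_ax_bx_c$ over $x_0x_1$. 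You are conflating the denominator vector $\varepsilon_0+\varepsilon_1$, visible in the Kronecker formula $(x_0^2+x_1^2+1)/(x_0x_1)$, with the index, which is $\pm(\varepsilon_0-\varepsilon_1)$. Your Kronecker check only compares final answers, so it cannot catch this. Note also the convention of Theorem~\ref{multCK2}, with functor $\mathrm{Hom}(T,-)$ and $X^T_{\Sigma T_i}=x_i$. There the prefactor is $\prod_i x_i^{-\langle S_i,\varphi M\rangle}=\mathbf{x}^{-\mathrm{coind}\,M}$, which for $M_\lambda$ is the inverse of your $x^{-\mathrm{ind}_T(M)}$.

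The missing computation is short. For generic $\nu$, $N_\nu$ has top $S_0$, socle $S_1$ and minimal projective presentation $P_1^{\mathcal{B}}\to P_0^{\mathcal{B}}\to N_\nu\to 0$, so $\mathrm{ind}\,M_\lambda=[P_0^{\mathcal{B}}]-[P_1^{\mathcal{B}}]$. Since $M_\lambda$ is homogeneous, $\Sigma M_\lambda\cong\tau M_\lambda\cong M_\lambda$ in $\mathcal{C}(Q)$. Then \cite[Lemma~2.1]{P08} gives $-\mathrm{coind}\,M_\lambda=[P_0^{\mathcal{B}}]-[P_1^{\mathcal{B}}]$, so the prefactor is $x_0/x_1$. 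Your injective-copresentation idea works equally well: it computes $\langle S_i,N_\nu\rangle$ directly, namely $1$ for $i=1$, $-1$ for $i=0$, and $0$ otherwise. Next, $\langle S_i,S_1\rangle_a$ is $1$ for $i=a,b,c$ and $-2$ for $i=0$, while $\langle S_i,N_\nu\rangle_a$ is $2$ for $i=1$ and $-2$ for $i=0$ (zero elsewhere). The sum is therefore $1+x_ax_bx_c/x_0^2+x_1^2/x_0^2$, and multiplying by $x_0/x_1$ gives the claim.

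Your fallback via Proposition~\ref{InterpretationOfGrowth} and Remark~\ref{relatingBraceletsandMlambda} can be made to work in types $\widetilde A$ and $\widetilde D$. There $\Theta$ is computed as the cluster algebra element of a closed curve. It does not work in type $\widetilde E$: there is no surface, and Theorem~\ref{g_e} only equates integer growth values of friezes, not Laurent polynomials.
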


\begin{proof}
We will prove the result for $Q=\widetilde{D}_{m},\widetilde{E}_{6},\widetilde{E}_{7},\widetilde{E}_{8}$. The argument for $Q=\widetilde{A}_{m}$ is similar. 
Let $\lambda$ be such that $\varphi(M_\lambda)\cong N_\nu$ for some $\nu$, this exists by Lemma \ref{HomModOfB}. 
For simplicity we will write $N$ instead of $N_{\nu}$. The subrepresentations of $N$ are the zero module $0$, 
the simple module $S_1$, and $N$ itself. 
Moreover, its Euler-Poincar\'e characteristic $\chi(\Gr_{e_i}(\varphi M_{\lambda}))$ is $1$. 
We recall that $\mathrm{Hom}_{\mathcal{B}}(S_{i},S_{j})=0$, for $i\neq j$, and that $\dim\mathrm{Ext}^{1}_{\mathcal{B}}(S_i,S_j)=\#\lbrace\text{arrows from $i$ to $j$ in $\Gamma$}\rbrace$. Hence, we conclude that
    \begin{align*}
        \langle S_i,S_1\rangle_{a}&=\dim\mathrm{Hom}_{\mathcal{B}}(S_i,S_1)-\dim\mathrm{Ext}^{1}_{\mathcal{B}}(S_i,S_1)-\dim\mathrm{Hom}_{\mathcal{B}}(S_1,S_i)+\dim\mathrm{Ext}^{1}_{\mathcal{B}}(S_1,S_i)\\
        &=
        \begin{cases}
            1 & \text{if}\;i=a,b,c\\
            -2 & \text{if}\; i=0\\
            0 & \text{otherwise}
        \end{cases}
    \end{align*}
    
A minimal projective presentation of $N$ is given by
\[
P_{1}^{\mathcal{B}}\rightarrow P_{0}^{\mathcal{B}}\rightarrow N\rightarrow 0
\]
where $P_{1}^{\mathcal{B}},P_{0}^{\mathcal{B}}\in\mathcal{B}\mod$ are indecomposable projective $\mathcal{B}$-modules at the vertices $1$ and $0$, respectively. Since $M_{\lambda}$ is homogeneous in $\mathcal{C}(Q)$, 
we have $\Sigma M_\lambda\cong M_\lambda$ and by~\cite[Lemma~2.1 (2)]{P08}, we conclude that 
\[
-\mathrm{coind}_{\mathcal{B}}\,M_{\lambda}=[P_{0}^{\mathcal{B}}]-[P_{1}^{\mathcal{B}}]
\]

    Notice that the socle $\mathrm{soc}(N)=S_1$ and $\mathrm{top}(N)=S_0$. Hence, we get that $\dim\mathrm{Hom}_{\mathcal{B}}(S_i,N)=0=\dim\mathrm{Hom}_{\mathcal{B}}(N,S_i)$ for $i\neq 1,0$. Since $N$ is a homogeneous $\mathcal{B}$-module, we have by AR duality that
    \begin{align*}
        \dim\mathrm{Ext}^{1}_{\mathcal{B}}(S_i,N)=\dim\underline{\mathrm{Hom}}_{\mathcal{B}}(N,S_i)\quad\text{and}\quad \dim\mathrm{Ext}^{1}_{\mathcal{B}}(N,S_i)=\dim\overline{\mathrm{Hom}}_{\mathcal{B}}(S_i,N)
    \end{align*}
    Additionally, for $l=1,0$, we have
    \begin{align*}
        \dim\mathrm{Ext}^{1}_{\mathcal{B}}(S_l,N)=\dim\mathrm{Ext}^{1}_{\Lambda}(S_l,N)\quad\text{and}\quad \dim\mathrm{Ext}^{1}_{\mathcal{B}}(N,S_l)=\dim\mathrm{Ext}^{1}_{\Lambda}(N,S_l)
    \end{align*}
    Therefore, we get
    \begin{align*}
        \langle S_i,N\rangle_{a}&=\dim\mathrm{Hom}_{\mathcal{B}}(S_i,N)-\dim\mathrm{Ext}^{1}_{\mathcal{B}}(S_i,N)-\dim\mathrm{Hom}_{\mathcal{B}}(N,S_i)+\dim\mathrm{Ext}^{1}_{\mathcal{B}}(N,S_i)\\
        &=
        \begin{cases}
            2 & \text{if}\;i=1\\
            -2 & \text{if}\; i=0\\
            0 & \text{otherwise}
        \end{cases}
    \end{align*}
    Combining all of this, we get 
    \begin{align*}
        X^{T}_{M_{\lambda}}&=\mathbf{x}^{-\mathrm{coind}_{\mathcal{B}}\,M_{\lambda}}\sum_{e}\chi(\Gr_{e}(\varphi  M_{\lambda}))\prod_{i}x_{i}^{\langle S_i,\,e\rangle_{a}}\\
        &=\frac{x_0}{x_1}\left(1+\frac{x_ax_bx_c}{x_0^{2}}+\frac{x_1^{2}}{x_0^{2}}\right)\\
        &=\frac{x_0}{x_1}+\frac{x_1}{x_0}+\frac{x_ax_bx_c}{x_1x_0}
    \end{align*}
as claimed. 
\end{proof}

%
\section{Examples}

In this section we illustrate the main results in this article by calculating the first few growth coefficients for two 
examples.

\begin{example}

Let $Q$ be the following quiver of 
type $\widetilde{D}_4$, 
\[
Q:
\begin{tikzcd}[row sep=small, column sep=small]
    1 && 4 \\
    & 3 \\
    2 && 5
    \arrow[from=1-3, to=2-2]
    \arrow[from=2-2, to=1-1]
    \arrow[from=2-2, to=3-1]
    \arrow[from=3-3, to=2-2]
\end{tikzcd}
\]

The AR quiver $\Gamma(\mathbb{K}Q\mod)$ has three non-homogeneous stable tubes $(\mathcal{T}_1,\tau), (\mathcal{T}_2,\tau), (\mathcal{T}_3,\tau)$, each of rank $2$. For each $1 \leq i \leq 3$, denote the $\tau$-cycle of quasi-simples at the mouth of tube $\mathcal{T}_i$ by $(R^{(i)}_{1}, R_{2}^{(i)})$. The modules $R^{(i)}_j$ are given by the following representations:

\medskip
\begingroup
\renewcommand{\arraystretch}{1.2}
\setlength{\tabcolsep}{5pt}

\noindent
\scriptsize
\begin{tabular}{c|c|c}
\begin{tabular}{>{\centering\arraybackslash}m{0.28\linewidth}}
\begin{tikzcd}[row sep=4pt, column sep=4pt]
    \KK && \KK \\
    & \KK \\
    \KK && \KK
    \arrow["{1}"', from=1-3, to=2-2]
    \arrow["{1}"', from=2-2, to=1-1]
    \arrow["{1}"', from=2-2, to=3-1]
    \arrow["{1}"', from=3-3, to=2-2]
\end{tikzcd} \hspace{4mm}
\begin{tikzcd}[row sep=4pt, column sep=4pt]
    0 && 0 \\
    & \KK \\
    0 && 0
    \arrow[ from=1-3, to=2-2]
    \arrow[ from=2-2, to=1-1]
    \arrow[ from=2-2, to=3-1]
    \arrow[ from=3-3, to=2-2]
\end{tikzcd}\\
 $R_1^{(1)}$ \hspace{15mm} $R_2^{(1)}$
\end{tabular}
&
\begin{tabular}{>{\centering\arraybackslash}m{0.28\linewidth}}
\begin{tikzcd}[row sep=4pt, column sep=4pt]
    \KK && 0 \\
    & \KK \\
    0 && \KK
    \arrow[ from=1-3, to=2-2]
    \arrow["{1}"', from=2-2, to=1-1]
    \arrow[ from=2-2, to=3-1]
    \arrow["{1}"', from=3-3, to=2-2]
\end{tikzcd} \hspace{4mm}
\begin{tikzcd}[row sep=4pt, column sep=4pt]
    0 && \KK \\
    & \KK \\
    \KK && 0
    \arrow["{1}"', from=1-3, to=2-2]
    \arrow[ from=2-2, to=1-1]
    \arrow["{1}"', from=2-2, to=3-1]
    \arrow[ from=3-3, to=2-2]
\end{tikzcd}\\
$R_1^{(2)}$ \hspace{15mm} $R_2^{(2)}$
\end{tabular}
&
\begin{tabular}{>{\centering\arraybackslash}m{0.28\linewidth}}
\begin{tikzcd}[row sep=4pt, column sep=4pt]
    0 && 0 \\
    & \KK \\
    \KK && \KK
    \arrow[ from=1-3, to=2-2]
    \arrow[ from=2-2, to=1-1]
    \arrow["{1}"', from=2-2, to=3-1]
    \arrow["{1}"', from=3-3, to=2-2]
\end{tikzcd} \hspace{4mm}
\begin{tikzcd}[row sep=4pt, column sep=4pt]
    \KK && \KK \\
    & \KK \\
    0 && 0
    \arrow["{1}"', from=1-3, to=2-2]
    \arrow["{1}"', from=2-2, to=1-1]
    \arrow[ from=2-2, to=3-1]
    \arrow[ from=3-3, to=2-2]
\end{tikzcd}\\
$R_1^{(3)}$ \hspace{15mm} $R_2^{(3)}$
\end{tabular}
\end{tabular}

\endgroup
\medskip

In this case, we can compute the CC map by calculating the number of submodules (there are no higher multiplicities in the Euler Poincar\'e characteristics). We get three $2$-periodic infinite frieze patterns $\mathcal{F}_1,\mathcal{F}_2$, and $\mathcal{F}_3$, with quiddity sequences $(8,2), (4,4)$, and $(4,4)$, respectively.

\begingroup
\scriptsize
\[ 
    \mathcal{F}_1:\qquad\quad
    \begin{tikzcd}[row sep=0.5ex, column sep=0.5ex]
	   & \dotsb && 0 && 0 && 0 && 0 & {} & \dotsb \\
	   \dotsb && 1 && 1 && 1 && 1 && 1 && \dotsb \\
	   & \dotsb && 8 && 2 && 8 && 2 && \dotsb \\
	   \dotsb && 15 && 15 && 15 && 15 && 15 && \dotsb \\
	   & \dotsb && 28 && 112 && 28 && 112 && \dotsb \\
	   \dotsb && 209 && 209 && 209 && 209 && 209 \\
	   & \dotsb && 1560 && 390 && 1560 && 390 && \dotsb \\
	   \dotsb && 2911 && 2911 && 2911 && 2911 && 2911 && \dotsb \\
	   &&& \vdots && \vdots && \vdots && \vdots
    \end{tikzcd}
\]
\endgroup

\medskip

\setlength{\tabcolsep}{4pt}      
\renewcommand{\arraystretch}{0.75}

\begin{center}
\scriptsize
\begin{tabular}{@{} >{\centering\arraybackslash}m{0.45\linewidth}
                 !{\vrule width 1pt} 
                 >{\centering\arraybackslash}m{0.45\linewidth} @{}}
\(\mathcal{F}_2:\)
\(
\begin{array}{@{}ccccccccc@{}}
  & \dotsb & & 0 & & 0 & & 0 & \dotsb \\[3pt]
   & & 1 & & 1 & & 1 & &  \\[3pt]
  & \dotsb & & 4 & & 4 & & 4 & \dotsb \\[3pt]
   & & 15 & & 15 & & 15 & &  \\[3pt]
  & \dotsb & & 56 & & 56 & & 56 & \dotsb \\[3pt]
   & & 209 & & 209 & & 209 & &  \\[3pt]
  & & & \vdots & & \vdots & & \vdots &
\end{array}
\)
&
\(\mathcal{F}_3:\)
\(
\begin{array}{@{}ccccccccc@{}}
  & \dotsb & & 0 & & 0 & & 0 & \dotsb \\[3pt]
   & & 1 & & 1 & & 1 & &  \\[3pt]
  & \dotsb & & 4 & & 4 & & 4 & \dotsb \\[3pt]
   & & 15 & & 15 & & 15 & &  \\[3pt]
  & \dotsb & & 56 & & 56 & & 56 & \dotsb \\[3pt]
   & & 209 & & 209 & & 209 & &  \\[3pt]
  & & & \vdots & & \vdots & & \vdots &
\end{array}
\)
\end{tabular}
\end{center}
\medskip

We find 
$s_{1}(\mathcal{F}_1)=s_{1}(\mathcal{F}_2)=s_{1}(\mathcal{F}_3)=14$ 
and 
$s_{2}(\mathcal{F}_1)=s_{2}(\mathcal{F}_2)=s_{2}(\mathcal{F}_3)=194$.
Let us look at the homogeneous tubes: 
For $\lambda\in\KK\setminus\lbrace 0,1\rbrace$, the quasi-simple $M_{\lambda}$ in a homogeneous 
tube $(\mathcal{H}_{\lambda},\tau)$ has the following representation

\[
\begin{tikzcd}[row sep=3ex, column sep=3ex]
	&& {\KK} && {\KK} \\
	{M_{\lambda}:} &&& {\KK^{2}} \\
	&& {\KK} && {\KK}
	\arrow["{\spmat{1 & 1}}", from=2-4, to=1-3]
	\arrow["{\spmat{\lambda & 1}}"',from=2-4, to=3-3] 
	\arrow["{\spmat{1 \\ 0}}"',from=1-5, to=2-4] 
	\arrow["{\spmat{0 \\ 1}}"',from=3-5, to=2-4] 
\end{tikzcd}
\]

We have $\underline{\dim}\,M_{\lambda}=\delta=(1,1,2,1,1)$. Since $M_{\lambda}$ is a general representation of $\rep_{\beta}(Q)$, we can use Schofield's criterion to calculate the Schofield subdimension vectors of $M_{\lambda}$ in a recursive manner. Table \ref{tab:grchi} records the subdimension vectors \( e \), along with the Euler–Poincar\'e characteristics of the associated quiver Grassmannians.

\begin{table}[htbp]
  \centering
  \renewcommand{\arraystretch}{1.2}
  \begin{tabular}{>{\centering\arraybackslash}m{6.5cm} >{\centering\arraybackslash}m{4cm}}
    \toprule
    \textbf{Subdimension vectors \( e \) of \( \delta \)} & \(\chi(\Gr_{e}(M_{\lambda}))\) \\
    \midrule
    \((0, 0, 0, 0, 0)\) & 1 \\
    \((1, 0, 0, 0, 0)\) & 1 \\
    \((0, 1, 0, 0, 0)\) & 1 \\
    \((1, 1, 0, 0, 0)\) & 1 \\
    \((1, 0, 1, 0, 0)\) & 1 \\
    \((0, 1, 1, 0, 0)\) & 1 \\
    \((1, 1, 1, 0, 0)\) & 2 \\
    \((1, 1, 2, 0, 0)\) & 1 \\
    \((1, 1, 1, 1, 0)\) & 1 \\
    \((1, 1, 2, 1, 0)\) & 1 \\
    \((1, 1, 1, 0, 1)\) & 1 \\
    \((1, 1, 2, 0, 1)\) & 1 \\
    \((1, 1, 2, 1, 1)\) & 1 \\
    \bottomrule
  \end{tabular}
  \medskip
  \caption{Euler-Poincar\'e characteristics $\chi(\Gr_e(M_{\lambda}))$ for subdimension vectors $e$}
  \label{tab:grchi}
\end{table}

Hence, we see that $x_{M_{\lambda}}=\restr{X_{M_{\lambda}}}{x_i=1}=14$. 
Using Lemma \ref{ChebyshevRecurRelnForMlambda}, we calculate $x_{M_{\lambda}[2]}=195$, and $x_{M_{\lambda}[3]}=2716$. Hence, we see that 
\begin{align*}
    &s_{1}(\mathcal{F}_i)=x_{M_{\lambda}}\\
    &s_{2}(\mathcal{F}_i)=x_{M_{\lambda}[2]}-x_{M_{\lambda}[0]}\\
    &s_{3}(\mathcal{F}_i)=x_{M_{\lambda}[3]}-x_{M_{\lambda}}
\end{align*}

Moreover, we can use Theorem~\ref{multCK2} to compute the CC-map for \( M_\lambda \): 
 \[
     X_{M_\lambda}(\mathbf{x}) = 
     \frac{1}{x_1 x_2 x_3^2 x_4 x_5}
     \left(
     \begin{aligned}      
        &x_1^2 x_2^2 x_3^2 + 2x_1^2 x_2^2 x_3 + x_1^2 x_2^2 \\
         &+ 4x_1 x_2 x_3 x_4 x_5 + 2x_1 x_2 x_4 x_5 \\
         &+ x_3^2 x_4^2 x_5^2 + 2x_3 x_4^2 x_5^2 + x_4^2 x_5^2
     \end{aligned}
     \right)
 \]

\end{example}

\begin{example}
    
Let $Q$ be the following quiver of 
type $\widetilde{E}_6$.

\[\begin{tikzcd}
	&&& 5 \\
	{Q:} &&& 4 \\
	& 3 & 2 & 1 & 6 & 7
	\arrow[from=1-4, to=2-4]
	\arrow[from=2-4, to=3-4]
	\arrow[ from=3-2, to=3-3]
	\arrow[from=3-3, to=3-4]
	\arrow[ from=3-5, to=3-4]
	\arrow[from=3-6, to=3-5]
\end{tikzcd}\]
The AR quiver $\Gamma(\mathbb{K}Q\mod)$ has three non-homogeneous stable tubes $(\mathcal{T}_1,\tau), (\mathcal{T}_2,\tau), (\mathcal{T}_3,\tau)$, of rank $2,3$, and $3$, respectively. Denote the $\tau$-cycle of quasi-simples at the mouth of tube $\mathcal{T}_1,\mathcal{T}_2$, and $\mathcal{T}_3$ by $(R^{(1)}_{1}, R_{2}^{(1)}),(R^{(2)}_{1}, R_{2}^{(2)},R_{3}^{(2)})$, and $(R^{(3)}_{1}, R_{2}^{(3)},R_{3}^{(3)})$, respectively. The modules $R^{(i)}_j$ are rigid and have the following dimension vectors
\[
    \underline{\dim}\,R^{(1)}_{1}=(1,1,0,1,0,1,0)\qquad \underline{\dim}\,R^{(1)}_{2}=(2,1,1,1,1,1,1)
\]
\[
    \underline{\dim}\,R^{(2)}_{1}=(1,1,1,0,0,1,0)\quad \underline{\dim}\,R^{(2)}_{2}=(1,0,0,1,0,1,1)\quad \underline{\dim}\,R^{(2)}_{3}=(1,1,0,1,1,0,0)
\]
\[
    \underline{\dim}\,R^{(3)}_{1}=(1,1,0,0,0,1,1)\quad \underline{\dim}\,R^{(3)}_{2}=(1,1,1,1,0,0,0)\quad \underline{\dim}\,R^{(3)}_{3}=(1,0,0,1,1,1,0)
\]
For an explicit description of the above representations we refer the readers to~\cite[Chapter 13, p.~166]{SS07}.

Corresponding to each of these three tubes, we get three infinite frieze patterns $\mathcal{F}_1,\mathcal{F}_2$, and $\mathcal{F}_3$ of period $2,3$, and $3$, respectively. We can compute the 
Euler-Poincar\'e characteristics of the quasi-simple rigid modules in these tube and obtain the quiddity sequences $(9,36), (7,7,7)$, and $(7,7,7)$, respectively.

\[  
\mathcal{F}_1:\qquad
\begin{tikzcd}[row sep=0.5ex, column sep=0.5ex]
	   & \dotsb && 0 && 0 && 0 && 0 & {} & \dotsb \\
	   \dotsb && 1 && 1 && 1 && 1 && 1 && \dotsb \\
	   & \dotsb && 9 && 36 && 9 && 36 && \dotsb \\
	   \dotsb && 323 && 323 && 323 && 323 && 323 && \dotsb \\
	   & \dotsb && 11592 && 2898 && 1152 && 2898 && \dotsb \\
	   && \vdots && \vdots && \vdots && \vdots && \vdots
    \end{tikzcd}
\]

\medskip
\medskip
\setlength{\tabcolsep}{4pt}      
\renewcommand{\arraystretch}{0.75}

\begin{center}
\scriptsize
\begin{tabular}{@{} >{\centering\arraybackslash}m{0.45\linewidth}
                 !{\vrule width 1pt} 
                 >{\centering\arraybackslash}m{0.45\linewidth} @{}}
\(\mathcal{F}_2:\)
\(
\begin{array}{@{}ccccccccc@{}}
  & \dotsb & & 0 & & 0 & & 0 & \dotsb \\[3pt]
   & & 1 & & 1 & & 1 & &  \\[3pt]
  & \dotsb & & 7 & & 7 & & 7 & \dotsb \\[3pt]
   & & 48 & & 48 & & 48 & &  \\[3pt]
  & \dotsb & & 329 & & 329 & & 329 & \dotsb \\[3pt]
   & & & \vdots & & \vdots & & \vdots &
\end{array}
\)
&
\(\mathcal{F}_3:\)
\(
\begin{array}{@{}ccccccccc@{}}
  & \dotsb & & 0 & & 0 & & 0 & \dotsb \\[3pt]
   & & 1 & & 1 & & 1 & &  \\[3pt]
  & \dotsb & & 7 & & 7 & & 7 & \dotsb \\[3pt]
   & & 48 & & 48 & & 48 & &  \\[3pt]
  & \dotsb & & 329 & & 329 & & 329 & \dotsb \\[3pt]
   & & & \vdots & & \vdots & & \vdots &
\end{array}
\)
\end{tabular}
\end{center}

\medskip

So we find that $s_{1}(\mathcal{F}_1)=s_{1}(\mathcal{F}_2)=s_{1}(\mathcal{F}_3)=322$.

For $\lambda\in\KK\setminus\lbrace 0,1\rbrace$, the quasi-simple $M_{\lambda}$ in a homogeneous stable tube $(\mathcal{H}_{\lambda},\tau)$ has $\underline{\dim}\,M_{\lambda}=(3,2,1,2,1,2,1)$. For an explicit description of the representation corresponding 
to $M_\lambda$, 
we refer the reader to~\cite[Chapter 13, p.~167]{SS07}. It can be verified that $x_{M_{\lambda}}=322$. We omit the computation here, as the list of subdimension vectors involved is too extensive to include. 
\end{example}

\appendix

\section{Background from 
representation theory}
\label{sec:tame-hered-alg}

Here, we recall facts we use from representation theory. For more details and definitions, we refer to ~\cite{ASS06, SS07, DW17}.

We let $\KK$ be an algebraically closed field. Let $Q$ be an affine quiver and denote by $\mathbb{K}Q$ the path algebra of $Q$. The \emph{dimension vector} $\beta \in(\mathbb{Z}_{\geq 0})^{Q_0}$ of a representation $V \in \rep(Q)$ is defined by $\beta(i) = \dim_{\mathbb {K}} V_i$. The \emph{Euler characteristic} of two representations $V,W$ of $Q$, denoted by $\chi(V,W)$, is defined as
\[
    \chi(V,W) \coloneqq \dim\Hom (V,W) - \dim\Ext (V,W)
\]
Let $\alpha = \underline{\dim}\,V$ and $\beta = \underline{\dim}\,W$. The \emph{Euler form} or \emph{Ringel form} on $\mathbb{R}^{ Q_ 0}$ is defined by
\[
    \langle \alpha, \beta\rangle =\sum_{i\in Q_ 0}\alpha(i)\beta(i) -\sum_{a\in Q _1}\alpha(ta)\beta(ha)
\]
By~\cite[Proposition 2.5.2]{DW17} the Euler characteristic $\chi(V,W)$ is 
equal to $\langle \alpha, \beta\rangle$.

%
\subsection{Auslander-Reiten theory} 

The Auslander-Reiten quiver (AR quiver) of $\mathbb{K}Q$, which we denote by $\Gamma(\mathbb{K}Q\mod)$, provides a combinatorial and visual tool to study its module category. Its vertices correspond to isomorphism classes of indecomposable modules, and its arrows represent irreducible morphisms between them. As every object of $\mathbb{K}Q\mod$ can be uniquely written as a direct sum of indecomposable objects, the AR quiver characterizes the module category.
The AR quiver comes with the so-called Auslander--Reiten translation (AR translation) which is denoted by $\tau$.

The AR quiver $\Gamma(\mathbb{K}Q\mod)$ has infinitely many connected components. There is a unique component containing all indecomposable projective modules, called the \emph{preprojective component} $\mathcal{P}(\mathbb{K}Q)$, and a unique component containing all indecomposable injective modules, called the \emph{preinjective component} $\mathcal{I}(\mathbb{K}Q)$. Any connected component which is neither preprojective nor preinjective is called a \emph{regular component}. We denote by $\mathcal{R}(\mathbb{K}Q)$ the family of all the regular components of $\Gamma(\mathbb{K}Q\mod)$. The homomorphisms in $\mathbb{K}Q\mod$ between 
the different types of components are directed, they only go in one direction: 
\[
    \mathrm{Hom}_{\mathbb{K}Q}(\mathcal{R}(\mathbb{K}Q),\mathcal{P}(\mathbb{K}Q))=\mathrm{Hom}_{\mathbb{K}Q}(\mathcal{I}(\mathbb{K}Q),\mathcal{R}(\mathbb{K}Q))=\mathrm{Hom}_{\mathbb{K}Q}(\mathcal{I}(\mathbb{K}Q),\mathcal{P}(\mathbb{K}Q))=0
\]
An indecomposable module lying in $\mathcal{P}(\mathbb{K}Q)$, $\mathcal{R}(\mathbb{K}Q)$, or $\mathcal{I}(\mathbb{K}Q)$ is called \emph{preprojective}, \emph{regular}, and \emph{preinjective}, respectively. We denote by $\delta$ the unique isotropic Schur root. For $M\in\mathbb{K}Q\mod$, the \emph{defect} of $M$ is defined as $\partial(M)\coloneqq\langle \delta,\underline{\dim}\, M\rangle=-\langle\underline{\dim}\,M,\delta\rangle$. The defect tells us which component 
an indecomposable belongs to: If $M$ is indecomposable, then $M$ is preprojective, regular, or
preinjective if and only if $\partial(M)$ is negative, zero, or positive, respectively~\cite[Lemma 2, pg 26]{WCrawleyLecQuivers}. 
A vertex $e\in Q_{0}$ is called an \emph{extending vertex} if $\delta(e)=1$.

Let $A_\infty$ denote the linear quiver with vertex set $\mathbb{N}$ and arrows $i\to i+1$. For a fixed $r\geq 1$, denote by $\mathbb{Z}A_{\infty}/\langle\tau^{r}\rangle$ the orbit of $\mathbb{Z}A_{\infty}$ under the action of the infinite cyclic group $\langle\tau^{r}\rangle$ generated by $\tau^{r}$. It is known that every regular component $\mathcal{T}$ of $\Gamma(\mathbb{K}Q)$ is isomorphic to a quotient of this form, namely
\[
\mathcal{T}\cong\mathbb{Z}A_\infty/\langle\tau^n\rangle
\]
for a uniquely determined integer $n\ge1$. We will write $(\mathcal{T},\tau)$ for such a component. It is called a \emph{stable tube of rank} $n$. 
The integer $n$ measures the periodicity of the component (i.e. the width of the tube). In case $n=1$, the tube is called \emph{homogeneous}, and for $n>1$ it is called \emph{non-homogeneous}. 
Let $(\mathcal{T},\tau)$ be a stable tube of rank $n\ge1$. 
A sequence $(x_1, \dotsc , x_r)$ of
points of $\mathcal{T}$ is said to be a $\tau$-cycle if $\tau x_1 = x_r, \tau x_2 = x_1, \dotsc , \tau x_r =x_{r-1}$. The set of all points in $\mathcal{T}$ having exactly one immediate predecessor (or, equivalently, exactly one immediate successor) is called the \emph{mouth} of $\mathcal{T}$. A representation $V\in \mathbb{K}Q\mod$ is a {\em brick} or a \emph{Schur} representation if $\mathrm{End}_{\mathbb{K}Q}(V)\cong \KK$. We call two bricks $R$ and $R'$ in $\mathbb{K}Q\mod$ to be \emph{orthogonal} if $\mathrm{Hom}_{\mathbb{K}Q}(R, R') = 0$, and $\mathrm{Hom}_{\mathbb{K}Q}(R', R) = 0$.

Let \( R_1, \dotsc, R_n \) be a family of pairwise orthogonal bricks in the category \( \mathbb{K}Q\mod \). Define an \emph{extension category}
\[
    \xi_{\mathbb{K}Q} = \mathrm{Ext}_{\mathbb{K}Q}(R_1, \dotsc, R_n)
\]
as the full subcategory of \( \mathbb{K}Q\mod \) consisting of all modules \( M \) that admit a finite filtration
\[
    M = M_l \supset M_{l-1} \supset \dotsb \supset M_1\supset M_0 = 0, \quad \text{for some } l \geq 1,
\]
such that each successive quotient \( M_i/M_{i-1} \) is isomorphic to one of the bricks \( R_1, \dotsc, R_n \), for all \( 0 < i \leq l \). In other words, $\xi_{\mathbb{K}Q}$ is the smallest additive subcategory of $\mathbb{K}Q\mod$ containing the bricks $R_i$ and is closed under extensions. It is known that $\xi_{\mathbb{K}Q}$ is an exact abelian subcategory of $\mathbb{K}Q\mod$, and $\lbrace R_1,\dotsc,R_n\rbrace$ is a complete set of pairwise non-isomorphic simple objects in $\xi_{\mathbb{K}Q}$, also called \emph{quasi-simples}. The module $M_{1}$ is called the \emph{quasi-socle} of $M$, denoted by $\mathrm{q.soc}(M)$, and $M/M_{l-1}$ is called the \emph{quasi-top} of $M$, denoted by $\mathrm{q.top}(M)$. An object $M$ in the category $\xi_{\mathbb{K}Q}$ is defined to be
\emph{uniserial}, if $N_1 \subseteq N_2$ or $N_2 \subseteq N_1$, for each pair $N_1, N_2$ of sub-objects of $M$ in $\xi_{\mathbb{K}Q}$. The length of the chain of sub-objects of a uniserial object $M$ in the
category $\xi_{\mathbb{K}Q}$ is called the \emph{quasi-length} of $M$.

In fact, every indecomposable object $M$ of the category $\xi_{\mathbb{K}Q}$ is uniserial and is
of the form $M\cong R_i[j]$, where $i \in \lbrace 1, \dotsc, n\rbrace$ and $j \geq 1$. Moreover, $\xi_{\mathbb{K}Q}$ forms a a stable tube $(\mathcal{T}_{\xi_{\mathbb{K}Q}},\tau)$ of rank $n$  in $\mathcal{R}(\mathbb{K}Q)$ with $\lbrace R_1,\dotsc,R_n\rbrace$ as a complete set of modules at the mouth of the tube $\mathcal{T}_{\xi_{\mathbb{K}Q}}$. Conversely, every stable tube $(\mathcal{T},\tau)$ in $\mathcal{R}(\mathbb{K}Q)$ is given by such an extension category. 

Given \( i \in \{1, \dotsc, n\} \) and \( j \geq 1 \), we have \( \mathrm{End}(R_i[j]) \cong \KK \) if and only if \( j \leq n \). The modules $R_{i}[j]$ are rigid, i.e., \( \mathrm{Ext}^1_{\mathbb{K}Q}(R_i[j], R_i[j])  = 0 \), if and only if \( j \leq n - 1 \). Additionally, if the stable tube \( \mathcal{T} \) is homogeneous, then
$\mathrm{Ext}^1_{\mathbb{K}Q}(M, M) \neq 0$
for any indecomposable \( M \in \mathcal{T} \).

The regular components form an orthogonal family of stable tubes parametrized by the projective line $\mathbb{P}^1$, with at most three non-homogeneous stable tubes.

\begin{theorem}~\cite[Section 9]{WCrawleyLecQuivers}
\label{ParameterizationOfTubes}
    Let $e$ be an extending vertex, and $P_{e}$ be the indecomposable projective at the vertex $e$ with $p=\underline{\dim}\, P_{e}$. There exists a unique indecomposable preprojective module $E$ (up to isomorphism) of dimension vector $\delta+p$. The map $\theta\mapsto\mathrm{coker}\,\theta$ gives a bijection between $\mathbb{P}\mathrm{Hom}_{\mathbb{K}Q}(P_{e},E)\rightarrow \Omega$, where
    \[
        \Omega = \left\{ M \;\middle|\;M\,\text{indecomposable},\,\underline{\dim}\, M= \delta \;\text{and}\; d(e) \neq 0,\; \text{where } d = \underline{\dim}\,\mathrm{q.top}(M) \right\}
    \]
    Moreover, each stable tube contains a unique module in the set $\Omega$.
\end{theorem}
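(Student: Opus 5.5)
The plan is to exploit the numerical facts $\langle p,\delta\rangle=\delta(e)=1$ and $\langle \delta,p\rangle=-1$. The first holds because $\mathrm{Hom}(P_e,V)=V_e$ and $P_e$ is projective. The second follows since $\delta$ lies in the radical of the symmetrised form, so $\langle p,\delta\rangle+\langle\delta,p\rangle=0$. From these, $q(\delta+p)=q(p)=1$, so $\delta+p$ is a real root of defect $\partial(\delta+p)=\partial(p)=-1<0$. By Kac's theorem there is a unique indecomposable module $E$ of this dimension vector. It is preprojective by the defect criterion, and it is a brick with $\mathrm{Ext}^1(E,E)=0$. Moreover $\dim\mathrm{Hom}(P_e,E)=E_e=\delta(e)+p(e)=2$, so $\mathbb{P}\mathrm{Hom}(P_e,E)\cong\mathbb{P}^1$.

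Next, take $0\neq\theta\in\mathrm{Hom}(P_e,E)$. Its image is a submodule of $E$, hence projective since $\mathbb{K}Q$ is hereditary. So $P_e\to\mathrm{im}\,\theta$ splits, and indecomposability of $P_e$ forces $\theta$ to be injective. Thus $M=\mathrm{coker}\,\theta$ has $\underline{\dim}\,M=\delta$, and $0\to P_e\to E\to M\to 0$ is non-split because $E$ is indecomposable.

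The main obstacle is proving that $M$ is indecomposable and regular. I would first show $M$ has no preprojective summand. Suppose $X$ were one, with $E\twoheadrightarrow X$. Compare defects ($\partial(E)=-1$, $\partial(P_e)=-1$, additivity) and use that preinjective summands have positive defect; I expect this to force $\partial$ of the remaining summands to be negative, which contradicts $\partial(M)=0$. This step is the one I expect to need the most care. If the defect count fails, I would instead use that the pushout along $M\to X$ would split $E$.

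Granting this, $\mathrm{Ext}^1(E,M)\cong D\mathrm{Hom}(M,\tau E)=0$, so $\dim\mathrm{Hom}(E,M)=\langle\delta+p,\delta\rangle=1$. Applying $\mathrm{Hom}(-,M)$ to the sequence gives an injection $\mathrm{End}(M)\hookrightarrow\mathrm{Hom}(E,M)$, so $\mathrm{End}(M)=\KK$. Hence $M$ is a brick, therefore indecomposable, and regular since its defect is $0$. The condition $d(e)\neq0$ on the quasi-top should come from the nonzero map $E\to M\to\mathrm{q.top}(M)$ not factoring through $P_e$.

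For bijectivity, proceed as follows.
\begin{itemize}
\item \emph{Injectivity.} If $\mathrm{coker}\,\theta\cong\mathrm{coker}\,\theta'$, then by $\dim\mathrm{Ext}^1(M,P_e)=-\langle\delta,p\rangle=1$ both extensions are scalar multiples of the unique one. The resulting isomorphism of middle terms is a scalar in $\mathrm{End}(E)=\KK$, so $[\theta]=[\theta']$.
\item \emph{Surjectivity.} Given $M\in\Omega$, take the unique non-split extension $0\to P_e\to E'\to M\to0$ with $\underline{\dim}\,E'=\delta+p$. I would show $E'$ is indecomposable, using $d(e)\neq 0$ to rule out $P_e$ splitting off a summand mapping onto the quasi-top. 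Then $E'\cong E$.
\end{itemize}

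Finally, for uniqueness per tube, use the structure recalled above. The quasi-simples $R_1,\dots,R_n$ of a tube of rank $n$ satisfy $\sum_i\underline{\dim}\,R_i=\delta$ and $\delta(e)=1$, so exactly one $R_i$ has nonzero $e$-entry. The indecomposables in the tube of dimension vector $\delta$ are exactly the $R_j[n]$, and their quasi-tops run over all $R_i$. Hence exactly one of them lies in $\Omega$. In the homogeneous case this is just the mouth module.
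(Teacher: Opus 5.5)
The paper gives no proof of this statement; it simply quotes Crawley-Boevey. Your outline follows the standard route, and several parts are fine as written: the computations of $\langle p,\delta\rangle$ and $\langle\delta,p\rangle$, the construction of $E$, injectivity of a nonzero $\theta$, the bound $\dim\mathrm{End}(M)\le\dim\mathrm{Hom}(E,M)=1$, injectivity of $\theta\mapsto\mathrm{coker}\,\theta$, and the count inside a tube.

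\textbf{The main gap} is the step you flag as the main obstacle: showing that $M=\mathrm{coker}\,\theta$ has no preprojective summand $X$. The defect comparison you sketch gives no contradiction. From $\partial(M)=0$ you only learn that the complement of $X$ has positive defect, and that is consistent with it having preinjective summands. The fallback fails too. The component of the extension class on $X$ lies in $\mathrm{Ext}^1(X,P_e)\cong D\,\mathrm{Hom}(P_e,\tau X)$, which need not vanish, so nothing forces $E$ to split.

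The missing idea is to look at the kernel $K$ of $E\twoheadrightarrow M\twoheadrightarrow X$. It is a submodule of the preprojective module $E$, hence preprojective, since regular and preinjective modules admit no nonzero maps to preprojective ones. It also contains $\theta(P_e)\neq 0$. So $\partial(K)\le -1$, and $\partial(X)=\partial(E)-\partial(K)\ge 0$, contradicting $\partial(X)<0$. Then $\partial(M)=0$ also excludes preinjective summands, so $M$ is regular and your brick argument finishes the step.

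\textbf{The other two steps are only gestured at.}

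For $d(e)\neq 0$, ``not factoring through $P_e$'' is not the reason. For any regular $N$ one has $\mathrm{Ext}^1(E,N)\cong D\,\mathrm{Hom}(N,\tau E)=0$. Hence $\dim\mathrm{Hom}(E,N)=\langle\delta+p,\underline{\dim}\,N\rangle=(\underline{\dim}\,N)(e)$. Taking $N=\mathrm{q.top}(M)$, which receives the nonzero map $E\twoheadrightarrow M\twoheadrightarrow N$, gives $d(e)\ge 1$.

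For surjectivity, you assert that the non-split extension $E'$ is indecomposable. You do not say how $d(e)\neq 0$ rules out a decomposition $E'=E'_P\oplus E'_R$ with $E'_R\neq 0$ regular. A direct route:
\begin{enumerate}
\item For $M\in\Omega$, take $0\neq f\in\mathrm{Hom}(E,M)$; this space is one-dimensional.
\item $K=\ker f$ is preprojective as a submodule of $E$, and nonzero for dimension reasons. $I=\mathrm{im}\,f\subseteq M$ has no preinjective summand. So $\partial(I)=-1-\partial(K)$ forces $\partial(K)=-1$ and $I$ regular.
\item Suppose $I\neq M$. The map $\mathrm{Hom}(E,M)\to\mathrm{Hom}(E,M/I)$ is onto, because $\mathrm{Ext}^1(E,I)=0$, and it kills $f$. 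Hence $(\underline{\dim}\,M/I)(e)=\dim\mathrm{Hom}(E,M/I)=0$. But $\mathrm{q.top}(M)$ is a quotient of the nonzero regular module $M/I$, which contradicts $d(e)\neq 0$.
\item So $f$ is onto. Then $K$ is preprojective of defect $-1$ with $\underline{\dim}\,K=p$, hence indecomposable, hence $K\cong P_e$.
\end{enumerate}

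Finally, your tube count uses $\sum_i\underline{\dim}\,R_i=\delta$. This is not among the facts recalled in the paper, and it should be cited from a source that proves it independently of this parametrization.
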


\subsection{Cluster Categories}

Write $\db(\mathbb{K}Q)$ for the bounded derived category of $\mathbb{K}Q\mod$, equipped with the shift functor $\Sigma$ and the Auslander-Reiten translation $\tau$. We have an autoequivalence of $\db(\mathbb{K}Q)$ given by $\tau^{-1}\Sigma$. The \emph{cluster category} associated with $Q$ is the orbit category
\[
    \mathcal{C}(Q) \;=\; \db(\mathbb{K}Q)/\tau^{-1}\Sigma,
\]
introduced in~\cite{BMRRT06}. The category $\mathcal{C}(Q)$ is a  Krull-Schmidt, 2-Calabi-Yau, triangulated category~\cite{BMRRT06, Keller2005}.

\begin{definition}
    A cluster character on $\mathcal{C}(Q)$ with values 
    in a commutative ring $\mathcal{R}$ is a map 
    \[
        X_{?} : \mathrm{ obj}(\mathcal{C}(Q)) \rightarrow \mathcal{R}
    \]
    such that for any $M,N\in\mathrm{obj}(\mathcal{C}(Q))$, we have
    \begin{enumerate}
        \item if $M\cong N$, then $X_{M} = X_{N}$
        \item we have $X_{M\oplus N} = X_{M}X_{N}$
        \item if $\mathrm{Ext}^{1}_{\mathcal{C}(Q)}(M,N) \cong \KK$, then $X_MX_N=X_B+X_{B'}$, where $B$ and $B'$ are the unique (up to isomorphism) objects in $\mathcal{C}(Q)$ such that there exists non-split triangles $$M \rightarrow B \rightarrow N \rightarrow \Sigma M \textrm{ and } N \rightarrow B'\rightarrow M \rightarrow \Sigma N.$$
    \end{enumerate}
\end{definition}

Fix a basic cluster-tilting object $T\in\mathcal{C}(Q)$, i.e., $\mathrm{Hom}_{\mathcal{C}(Q)}(T,\Sigma T) = 0$ and for any $X \in \mathrm{obj}(\mathcal{C}(Q))$, if $\mathrm{Hom}_{\mathcal{C}(Q)}(X,\Sigma T) = 0$, then $X$ belongs to the full subcategory $\mathsf{add}\,T$ formed by the direct summands of sums of copies of $T$. Let $\mathcal{B}\coloneqq\mathrm{End}_{\mathcal{C}(Q)}(T)$. The algebra $\mathcal{B}$ is called a \emph{cluster-tilted algebra of type $Q$}, and it is given by a Jacobi-finite quiver with potential $(\Gamma,W)$; for more details see~\cite[Section 3]{Amiot2009}. Denote by $\rmod$ the category of finite-dimensional right $\mathcal{B}$-modules. By~\cite{BuanCluster-titlted,KellerReitenCluster-tilted} we know that the functor 
\[
    \varphi : \mathcal{C}(Q)  \rightarrow  \rmod \quad\text{given by} \quad X  \mapsto \mathrm{Hom}_{\mathcal{C}(Q)}(T,X)
\]
induces an equivalence of categories $\mathcal{C}(Q)/\langle\Sigma T\rangle \cong \rmod$, where $\langle\Sigma T\rangle$ denotes the ideal of morphisms of $\mathcal{C}(Q)$ which factor through a direct sum of copies of $\Sigma T$.

Let $M\in\mathrm{obj}(\mathcal{C}(Q))$. By~\cite{KellerReitenCluster-tilted} there exists a triangle
\[
    M_1 \to M_0 \to M \to \Sigma M_1
\]
with $M_0,M_1\in\mathsf{add}\,T$. The \emph{index of $M$}, denoted by 
$\mathrm{ind}_{B}\,M$, is defined to be the class $[\varphi M_0] - [\varphi M_1]$ of the Grothendieck group $\mathrm{K}_{0}(\mathsf{proj}\,\mathcal{B})$. Similarly, define the \emph{coindex of $M$}, denoted by $\mathrm{coind}_{B}\,M$, to be the class $[\varphi M'_0] - [\varphi M'_1]$ of the Grothendieck group $\mathrm{K}_{0}(\mathsf{proj}\,\mathcal{B})$, where 
\[
    M \to \Sigma^2 M'_0 \to \Sigma^2 M'_1 \to \Sigma M
\]
is a triangle in $\mathcal{C}(Q)$ with $M'_0,M'_1\in\mathsf{add}\,T$.

Following~\cite{P08}, denote by $\mathrm{K}_{0}^{sp}(\rmod)$ the `split' Grothendieck group of $\rmod$, i.e., 
the quotient of the free abelian group on the set of isomorphism classes $[N]$ of
finite-dimensional $B$-modules $N$, modulo the subgroup generated by all elements
$$
    [N_1\oplus N_2] -[N_1] -[N_2]
$$
We have a bilinear form 
\[
    \langle - ,- \rangle : \mathrm{K}_{0}^{sp}(\rmod) \times \mathrm{K}_{0}^{sp}(\rmod) \rightarrow \mathbb{Z}
\]
defined by $\langle [N],[N']\rangle  = \mathrm{Hom}_{B}(N,N')-\mathrm{Ext}^{1}_{B}(N,N')$ for all finite-dimensional $B$-modules $N$ and $N'$. Define an antisymmetric bilinear form on $ \mathrm{K}_{0}^{sp}(\rmod)$ by setting 
$$
\langle [N],[N']\rangle_a \;=\; \langle N,N'\rangle - \langle N',N\rangle 
$$
for all finite-dimensional $B$-modules $N$ and $N'$.

\begin{lemma}[see Lemma 1.3~\cite{P08}]
\label{lem:lf}
    For any $i=1,\ldots, n$, the linear form 
    $\langle S_i,?\rangle_a \;:\, \mathrm{K}_{0}^{sp}(\rmod) \to \mathbb{Z}$ 
    induces a well-defined form
    $$
    \langle S_i,?\rangle_a \;:\, \mathrm{K}_{0}(\rmod) \to \mathbb{Z}.
    $$
\end{lemma}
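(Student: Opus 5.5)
The plan is to show that the form $\langle S_i,?\rangle_a = \langle S_i,?\rangle - \langle ?,S_i\rangle$ on $\mathrm{K}_0^{sp}(\rmod)$ vanishes on every element $[N_2]-[N_1]-[N_3]$ coming from a short exact sequence $0\to N_1\to N_2\to N_3\to 0$ of finite-dimensional $\mathcal{B}$-modules. Since these elements generate the kernel of $\mathrm{K}_0^{sp}(\rmod)\to\mathrm{K}_0(\rmod)$, this gives the claim. Apply $\mathrm{Hom}_{\mathcal{B}}(S_i,-)$ to the sequence to get the long exact sequence
\[
0\to \mathrm{Hom}(S_i,N_1)\to\mathrm{Hom}(S_i,N_2)\to\mathrm{Hom}(S_i,N_3)\to \mathrm{Ext}^1(S_i,N_1)\to\mathrm{Ext}^1(S_i,N_2)\to\mathrm{Ext}^1(S_i,N_3)\to \mathrm{Ext}^2(S_i,N_1)\to\cdots .
\]
Do the same with $\mathrm{Hom}_{\mathcal{B}}(-,S_i)$. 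Alternating sums then show that the defect of additivity of $\langle S_i,?\rangle$ equals the alternating sum of the cokernel terms beyond $\mathrm{Ext}^1$. These terms are the images $\mathrm{Ext}^1(S_i,N_3)\to\mathrm{Ext}^2(S_i,N_1)$, and symmetrically $\mathrm{Ext}^1(N_1,S_i)\to\mathrm{Ext}^2(N_3,S_i)$. So additivity fails only through the connecting maps into $\mathrm{Ext}^2$.

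The key step is to show that these two defects coincide, so that they cancel in the antisymmetrised form. For this I would use the 2-Calabi--Yau structure. Cluster-tilted algebras are Gorenstein of dimension at most one (Keller--Reiten), and there is a functorial duality relating $\mathrm{Ext}^2_{\mathcal{B}}$ and the stable category. Following Palu's argument, I would identify the defect of $\langle S_i,?\rangle$ with $\dim$ of the cokernel of $\mathrm{Hom}_{\mathcal{C}}(T_i,\Sigma\,\cdot)$-maps in $\mathcal{C}(Q)$ after lifting the sequence to a triangle. Both defects are then dimensions of a space and of its dual under the bifunctorial 2-CY isomorphism $\mathrm{Ext}^1_{\mathcal{C}}(X,Y)\cong D\mathrm{Ext}^1_{\mathcal{C}}(Y,X)$, so they are equal.

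Alternatively, and more simply for the applications here, one can work directly with dimension vectors. For a simple $S_i$, $\dim\mathrm{Hom}(S_i,N)-\dim\mathrm{Hom}(N,S_i)$ and the $\mathrm{Ext}^1$ terms can be computed via minimal projective and injective resolutions. I would show that $\langle S_i,N\rangle_a$ depends only on $\underline{\dim}\,N$, giving $\sum_j(r_{ij}-r_{ji})\dim N_j$ with $r_{ij}$ the number of arrows. The obstruction is exactly the relation terms, i.e.\ $\mathrm{Ext}^2$. In the Jacobian algebra these cancel: the relations $i\to j$ are the cyclic derivatives of $W$ indexed by arrows $j\to i$, so $\dim\mathrm{Ext}^2(S_i,S_j)\le\#\{\text{arrows } j\to i\}$, with equality in the relevant sense.

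I expect the main obstacle to be this step: justifying the cancellation of the $\mathrm{Ext}^2$ contributions, either via the 2-CY duality of the lifted triangles or via the potential. I would follow Palu's original argument, lifting the short exact sequence through $\varphi$ to a triangle in $\mathcal{C}(Q)$ and using the equivalence $\mathcal{C}(Q)/\langle\Sigma T\rangle\cong\rmod$.
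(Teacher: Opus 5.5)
The paper does not prove this lemma itself; it only quotes [P08, Lemma~1.3]. Your reduction is correct. The lemma is equivalent to the following: for every short exact sequence $0\to N_1\to N_2\to N_3\to 0$, the connecting maps $\partial\colon \mathrm{Ext}^1_{\mathcal{B}}(S_i,N_3)\to\mathrm{Ext}^2_{\mathcal{B}}(S_i,N_1)$ and $\partial'\colon \mathrm{Ext}^1_{\mathcal{B}}(N_1,S_i)\to\mathrm{Ext}^2_{\mathcal{B}}(N_3,S_i)$ have the same rank.

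\textbf{The gap.} That equality of ranks is the entire content of the lemma, and the proposal does not prove it. The 2-CY sketch never says which space is dual to which. The isomorphism $\mathrm{Ext}^1_{\mathcal{C}}(X,Y)\cong D\,\mathrm{Ext}^1_{\mathcal{C}}(Y,X)$ (with $D=\mathrm{Hom}_{\KK}(-,\KK)$), applied to a lifted triangle, only relates first extension groups in $\mathcal{C}(Q)$. You give no mechanism by which it controls $\mathrm{Ext}^2_{\mathcal{B}}$.

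The duality also cannot hold between the ambient groups. For $T=\KK Q$ one has $\mathcal{B}=\KK Q$, so $\mathrm{Ext}^2_{\mathcal{B}}=0$, while $\mathrm{Ext}^1_{\mathcal{B}}(N_1,S_i)$ need not vanish. Any duality must therefore be between the connecting maps themselves, and that is exactly what is missing.

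Your second route has the same hole. Saying that $\langle S_i,N\rangle_a$ depends only on $\underline{\dim}\,N$ is a restatement of the lemma. Information about $\dim\mathrm{Ext}^2_{\mathcal{B}}(S_i,S_j)$ cannot control the rank of $\partial$ for an arbitrary module; your inequality is also strict, e.g.\ for $\mathcal{B}=\KK Q$. The reason is that $\mathcal{B}$ usually has infinite global dimension, and $\langle S_i,?\rangle$ on its own is genuinely not additive. This already happens for the oriented $3$-cycle with all paths of length $2$ zero.

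\textbf{The missing idea.} Compute both halves from a projective presentation and an injective copresentation of $S_i$ in which the correction terms are visibly ranks of mutually dual maps.

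Take exchange triangles $T_i^*\to B_i\to T_i\to\Sigma T_i^*$ and $T_i\to B_i'\to T_i^*\to\Sigma T_i$ with $B_i,B_i'\in\mathrm{add}(T/T_i)$. Let $u\colon B_i'\to T_i^*\to B_i$ be the composite. We use two facts: $\varphi\Sigma T=0$, and cluster-tilted algebras have no loops.
\begin{itemize}
\item Applying $\varphi$ gives an exact sequence $\varphi B_i'\xrightarrow{\varphi u}\varphi B_i\to\varphi T_i\to S_i\to 0$. Along the way this shows $\varphi\Sigma T_i^*\cong S_i$.
\item Applying $\varphi\Sigma^2$ gives an exact sequence $0\to S_i\to\varphi\Sigma^2T_i\to\varphi\Sigma^2B_i'\xrightarrow{\varphi\Sigma^2u}\varphi\Sigma^2B_i$. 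Its last three terms are injective, since $\varphi\Sigma^2T'\cong D\mathcal{C}(T',T)$ by 2-CY.
\end{itemize}
For any $M$ these sequences give
\[
\langle S_i,M\rangle=\dim\mathrm{Hom}_{\mathcal{B}}(\varphi T_i,M)-\dim\mathrm{Hom}_{\mathcal{B}}(\varphi B_i,M)+\operatorname{rk}\mathrm{Hom}_{\mathcal{B}}(\varphi u,M),
\]
\[
\langle M,S_i\rangle=\dim\mathrm{Hom}_{\mathcal{B}}(M,\varphi\Sigma^2T_i)-\dim\mathrm{Hom}_{\mathcal{B}}(M,\varphi\Sigma^2B_i')+\operatorname{rk}\mathrm{Hom}_{\mathcal{B}}(M,\varphi\Sigma^2u).
\]
For $T'\in\mathrm{add}\,T$ there is a natural isomorphism $\mathrm{Hom}_{\mathcal{B}}(M,\varphi\Sigma^2T')\cong D\mathrm{Hom}_{\mathcal{B}}(\varphi T',M)$. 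Under it the first terms agree, and the second rank is the rank of the $\KK$-dual of the first map. Hence
\[
\langle S_i,M\rangle_a=\dim\mathrm{Hom}_{\mathcal{B}}(\varphi B_i',M)-\dim\mathrm{Hom}_{\mathcal{B}}(\varphi B_i,M),
\]
which is additive on short exact sequences because $\varphi B_i$ and $\varphi B_i'$ are projective.

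\textbf{Relation to your two routes.} In the Jacobian picture this is the correct version of your second idea. The relevant map is the matrix of second cyclic derivatives of $W$ at $i$, and cyclicity of $W$ makes the injective-side matrix its transpose. If you want to keep the connecting maps $\partial,\partial'$, the duality that makes them adjoint is the 3-CY duality on the finite-dimensional derived category of the Ginzburg dg algebra of $(\Gamma,W)$. There $\mathrm{Ext}^1_{\mathcal{B}}$ agrees with $\mathrm{Hom}(-,-[1])$ and $\mathrm{Ext}^2_{\mathcal{B}}$ injects into $\mathrm{Hom}(-,-[2])$. It is not the 2-CY duality of $\mathcal{C}(Q)$ applied to a lifted triangle.
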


For $N\in \rmod$ and $e=(e(i))_{i\in \Gamma_0}\in\mathrm{K}_{0}(\rmod)$ the Grothendieck group of $\rmod$, we write
\[
    \Gr_e(N)=\big\{(U_i)_{i\in \Gamma_0}\ \big|\ U_i\subseteq N_i,\ \dim U_i=e(i),\ \text{for all}\, a:i\to j,\ N_a(U_i)\subseteq U_j\big\}
\]
for the \emph{quiver Grassmannian} of submodules of $N$, and denote its \emph{Euler-Poincar\'e characteristic} by $\chi(\Gr_{e}(N))$. Denote by $T_1,\dotsc, T_m$ the pairwise non-isomorphic indecomposable direct summands of $T$. For $1\leq i\leq m$, let $S_i$ be the top of the projective $B$-module $P_i=\varphi\,T_i$. The set $\{S_i, i = 1,\ldots,m\}$ is a set of representatives for the iso-classes of simple $B$-modules. We will write $\mathbf{x}^\alpha$ for $\prod_{i=1}^n x_i^{[\alpha : P_i]}$ where $e \in\mathrm{K}_{0}(\mathsf{proj}\,\mathcal{B})$ and $[\alpha:P_i]$ is the $i$-th coefficient of $\alpha$ in the basis $[P_1],\dotsc,[P_m]$. Similar to~\cite{CC06,CK06}, we have the following Caldero-Chapoton map (CC map for short).

\begin{thmdef}[see Theorem 1.4~\cite{P08}]
\label{multCK2}
    The map $X^{T}_? : \mathrm{obj}(\mathcal
    {C}(Q))\rightarrow \mathbb{Q}[x_i^{\pm 1},1\leq i \leq m]$ defined by
    \[
        X^T_M \;=\;
        \begin{cases}
            x_i & \text{ if}\;M \simeq \Sigma T_i, \\
            \displaystyle
            \sum_{e} \chi\!\big(\Gr_{e}(\varphi M)\big)\;
            \prod_{i=1}^m x_i^{\langle S_i,e\rangle_a-\langle S_i,\varphi M\rangle}
            &\text{ otherwise}
        \end{cases}
    \]
    is a cluster character. In the special case where $T=\KK Q$, we will simply write $X_{?}$ for the CC-map. 
\end{thmdef}

By~\cite[Corollary 3.11]{Amiot2009}, the cluster category $\mathcal{C}(\mathcal{B})$ of the the cluster-tilted algebra $\mathcal{B}$ is triangle equivalent to $\mathcal{C}(Q)$. Hence, the CC-map for $\mathcal{C}(\mathcal{B})$ can be identified with the CC-map in the above Theorem \ref{multCK2}.

\bibliographystyle{alpha}
\bibliography{references}

\end{document}